 \newtheorem{theorem}{Theorem}[section]
 \newtheorem{corollary}[theorem]{Corollary}
 \newtheorem{proposition}[theorem]{Proposition}
 \theoremstyle{definition}
 \newtheorem{example}[theorem]{Example}
 \newtheorem{definition}[theorem]{Definition}
 \newtheorem{remark}[theorem]{Remark}
\newcommand{\Config}[2]{\mathrm{Config}_{#1, #2}}
\newcommand{\Stable}[2]{\mathrm{Stable}_{#1, #2}}
\newcommand{\DetSortRec}[2]{\mathrm{SortedDetRec}_{#1, #2}}
\newcommand{\Fer}[2]{\mathrm{Ferrers}_{#1, #2}}
\newcommand{\Para}[2]{\mathrm{ParaPoly}_{#1, #2}}
\newcommand{\DAGS}[2]{\mathrm{DAG}^{\mathrm{SR}}_{#1, #2}}
\newcommand{\DAGD}[2]{\mathrm{DAG}^{\mathrm{DR}}_{#1, #2}}
\newcommand{\Motz}[2]{\mathrm{LabMotz}_{#1, #2}}
\newcommand{\Configmn}{\mathrm{Config}_{m,n}}
\newcommand{\Stablemn}{\mathrm{Stable}_{m,n}}
\newcommand{\StoRecmn}{\mathrm{StoRec}_{m,n}}
\newcommand{\DetRecmn}{\mathrm{DetRec}_{m,n}}
\newcommand{\StoSortRecmn}{\mathrm{StoSortedRec}_{m,n}}
\newcommand{\DetSortRecmn}{\mathrm{DetSortedRec}_{m,n}}
\newcommand{\Fermn}{\mathrm{Ferrers}_{m,n}}
\newcommand{\DAGSmn}{\DAGS{m}{n}}
\newcommand{\DAGDmn}{\DAGD{m}{n}}
\newcommand{\inc}[1]{\mathrm{inc}\left(#1\right)}
\newcommand{\level}[1]{\mathrm{level}\left(#1\right)}
\newcommand{\DetStab}[1]{\mathrm{DetStab}\left(#1\right)}
\newcommand{\StoStab}[1]{\mathrm{StoStab}\left(#1\right)}
\newcommand{\R}{\mathbb{R}}
\newcommand{\Zp}{\mathbb{Z}_+}
\newcommand{\N}{\mathbb{N}}
\newcommand{\U}{\mathcal{U}}
\renewcommand{\L}{\mathcal{L}}
\newcommand{\sink}{\node (0) [draw, rectangle, fill=black] at (0,0) {};}
\newcommand{\ferrers}[2]{
  \begin{scope}[shift={#2}]
  \foreach[count=\yy] \xx in {#1}
    \draw [thick] (0,-\yy) grid (\xx,-\yy-1);
  \end{scope}
}
\newcommand{\Shift}{\mathsf{Shift}}
\newcommand{\Add}{\mathsf{Add}}
\newcommand{\tdot}[3]{\draw [fill=black,color=#3] (#1,#2) circle [radius=0.25];}
\definecolor{mygreen}{RGB}{20,180,20}
\begin{document}

\title{Abelian and stochastic sandpile models on complete bipartite graphs}
\author{Thomas Selig}
\address{Department of Computing, School of Advanced Technology, Xi'an Jiaotong-Livepool University}
\email{Thomas.Selig@xjtlu.edu.cn}
\author{Haoyue Zhu}
\email{Haoyue.Zhu18@student.xjtlu.edu.cn}
\keywords{Sandpile model, Complete bipartite graphs, Recurrent configurations, Ferrers diagrams, Motzkin paths.}

\begin{abstract}
In the sandpile model, vertices of a graph are allocated grains of sand. At each unit of time, a grain is added to a randomly chosen vertex. If that causes its number of grains to exceed its degree, that vertex is called unstable, and \emph{topples}. In the Abelian sandpile model (ASM), topplings are deterministic, whereas in the stochastic sandpile model (SSM) they are random. We study the ASM and SSM on complete bipartite graphs. For the SSM, we provide a stochastic version of Dhar's burning algorithm to check if a given (stable) configuration is recurrent or not, with linear complexity. We also exhibit a bijection between sorted recurrent configurations and pairs of \emph{compatible} Ferrers diagrams. We then provide a similar bijection for the ASM, and also interpret its recurrent configurations in terms of labelled Motzkin paths.
\end{abstract}

\maketitle


\section{Introduction}\label{sec:intro}

The Abelian sandpile model (ASM) is a dynamic process on a graph, where vertices are assigned a number of grains of sand. At each unit of time, a grain is added to a randomly chosen vertex. If this causes a vertex's number of grains to exceed its degree, the vertex is called \emph{unstable}, and \emph{topples}, sending one grain to each of its neighbours. A special vertex, the \emph{sink}, absorbs grains, and so the process eventually stabilises. The model was originally introduced by Bak, Tang and Wiesenfeld~\cite{BTW1,BTW2} as an example of a model exhibiting a phenomenon known as \emph{self-organised criticality}, before being formalised and generalised by Dhar~\cite{Dhar1}. 

Of central interest in the ASM are the \emph{recurrent configurations} -- those which appear infinitely often in the long-time running of the model. A fruitful direction of ASM research has focused on combinatorial studies of these for graph families with high levels of symmetry, such as complete graphs~\cite{CR}, complete bipartite~\cite{DLB} and multi-partite~\cite{CorPou} graphs, complete split graphs~\cite{DDLB,Duk} (see also~\cite{ADILBW} for generalisations), wheel and fan graphs~\cite{SelWheel}, Ferrers graphs~\cite{SSS,DSSS1}, permutation graphs~\cite{DSSS2}, and so on. Related combinatorial objects include parking functions and variations thereon, parallelogram polyominoes, lattice paths, subgraph structures, tableaux, and more.

In the ASM, the only randomness lies in the choice of vertex where grains are added at each time step. After this, the toppling and stabilisation processes are entirely deterministic. The main focus of this paper is instead a stochastic variant of the ASM, called \emph{stochastic sandpile model} (SSM), as introduced in~\cite{CMS}, in which topplings are made according to (biased) random coin flips. The SSM was studied on complete graphs in~\cite{SelSSM}. We begin by setting some notation and formally defining the model.

As usual, $\N$ denotes the set of strictly positive integers. We let $\Zp:=\N \, \cup \, \{0\}$ denote the set of non-negative integers. For $n \in \N$, we define $[n] := \{1,\ldots,n\}$. For a vector $a = (a_1, \ldots, a_n) \in \R^n$, we write $\inc{a} = \left( \tilde{a}_1,\ldots,\tilde{a}_n \right)$ for the non-decreasing rearrangement of $a$. In this paper, we consider the complete bipartite graph $K_{m,n}^0$. This is the graph with vertex set $\{v^t_0, v^t_1, \cdots, v^t_m\} \sqcup \{v^b_1, \cdots, v^b_n\}$ and edge set $\{ (v^t_i, v^b_j); \, i \in [m] \cup \{0\}, j \in [n] \}$. We refer to vertices $v^t_i$, resp.\ $v^b_j$, as \emph{top}, resp.\ \emph{bottom}, vertices in $K_{m,n}^0$. We will use the notation $v^*_i$ to refer to any arbitrary vertex of $K_{m,n}^0$. The vertex $v^t_0$, called the \emph{sink}, will play a special role in both the ASM and the SSM. Figure~\ref{fig:Detstab} shows the graph $K_{2,2}^0$, where the sink is represented as a black square. Finally, we fix a probability $p \in (0, 1)$.

A (sandpile) \emph{configuration} on $K_{m,n}^0$ is a vector $c = (c^t_1, \cdots, c^t_m; c^b_1, \cdots, c^b_n) \in \Zp ^ {m+n}$. For simplicity, we write $c = (c^t; c^b)$. We think of $c^*_i$ as the number of grains at vertex $v^*_i$. We denote by $\Configmn$ the set of all configurations on $K_{m,n}^0$. A top vertex $v^t_i$ (for $i \in [m]$), resp.\ bottom vertex $v^b_j$ (for $j \in [n]$), is \emph{stable} if $c^t_i < n$, resp.\ $c^b_j < m+1$ (i.e.\ the number of grains at the vertex is less than its degree). A configuration is stable if all of its vertices are stable. The set of stable configurations is denoted $\Stablemn$. 

Unstable vertices \emph{topple}. The toppling rules are different for the ASM and SSM, and we begin with the former. In the ASM, the toppling rule is deterministic: an unstable vertex (whether a top vertex $v^t_i$ or a bottom vertex $v^b_j$) will send one grain to each of its neighbours (this includes the sink $v^t_0$ for bottom vertices). This may cause other vertices who receive grains to become unstable, and there topple in turn. Grains that are sent to the sink exit the system (we think of the sink as ``absorbing excess grains'').

This last property makes it relatively straightforward to see that, starting from an unstable configuration $c$ and successively toppling unstable vertices, we eventually reach a stable configuration $c'$.
Moreover, Dhar~\cite{Dhar1} showed that the stable configuration $c'$ reached does not depend on the order in which vertices are toppled. Note that in the ASM this process is entirely deterministic.
We write $c' = \DetStab{c}$ and call it the \emph{deterministic stabilisation} of $c$. Figure~\ref{fig:Detstab} shows the deterministic stabilisation process for the configuration $c = (2, 1; 0, 2)$. Here, \textcolor{blue}{blue} vertices are unstable, \textcolor{mygreen}{green} edges represent grains being sent from an unstable vertex to a neighbour.

\begin{figure}[ht]
\centering
 
 \begin{tikzpicture}[scale=0.35]
 \sink
 \node [draw, circle, color=blue] (1) at (3,0) {2};
 \node [draw, circle] (2) at (6,0) {1};
 \node [draw, circle] (3) at (1.5,-3.5) {0};
 \node [draw, circle] (4) at (4.5,-3.5) {2};
 \foreach \xx in {0,2}
   \foreach \yy in {3,4}
     \draw [thick] (\xx)--(\yy);
 \draw [thick, mygreen] (1)--(3);
 \draw [thick, mygreen] (1)--(4);
 \draw [->] (7.5,-1.2)--(9.5,-1.2);
 \node at (8.5,-2) {\textcolor{blue}{$v^t_1$}};
 
 \begin{scope}[shift={(11,0)}]
 \sink
 \node [draw, circle] (1) at (3,0) {0};
 \node [draw, circle] (2) at (6,0) {1};
 \node [draw, circle] (3) at (1.5,-3.5) {1};
 \node [draw, circle, color=blue] (4) at (4.5,-3.5) {3};
 \foreach \xx in {0,1,2}
   \foreach \yy in {3}
     \draw [thick] (\xx)--(\yy);
 \draw [thick, mygreen] (4)--(0);
 \draw [thick, mygreen] (4)--(1);
 \draw [thick, mygreen] (4)--(2);
 \draw [->] (7.5,-1.2)--(9.5,-1.2);
 \node at (8.5,-2) {\textcolor{blue}{$v^b_2$}};
 \end{scope}
 
 \begin{scope}[shift={(22,0)}]
 \sink
 \node [draw, circle] (1) at (3,0) {1};
 \node [draw, circle, color=blue] (2) at (6,0) {2};
 \node [draw, circle] (3) at (1.5,-3.5) {1};
 \node [draw, circle] (4) at (4.5,-3.5) {0};
 \foreach \xx in {0,1}
   \foreach \yy in {3,4}
     \draw [thick] (\xx)--(\yy);
 \draw [thick, mygreen] (2)--(3);
 \draw [thick, mygreen] (2)--(4);
 \draw [->] (7.5,-1.2)--(9.5,-1.2);
 \node at (8.5,-2) {\textcolor{blue}{$v^t_2$}};
 \end{scope}
 
 \begin{scope}[shift={(33,0)}]
 \sink
 \node [draw, circle] (1) at (3,0) {1};
 \node [draw, circle] (2) at (6,0) {0};
 \node [draw, circle] (3) at (1.5,-3.5) {2};
 \node [draw, circle] (4) at (4.5,-3.5) {1};
 \foreach \xx in {0,1,2}
   \foreach \yy in {3,4}
     \draw [thick] (\xx)--(\yy);
 \end{scope}
 
 \end{tikzpicture}
 
 \caption{Illustrating the deterministic stabilisation for $c = (2,1; 0,2) \in \Config{2}{2}$. Vertices under the arrows represent the vertex being toppled in that phase. \label{fig:Detstab}}

\end{figure}
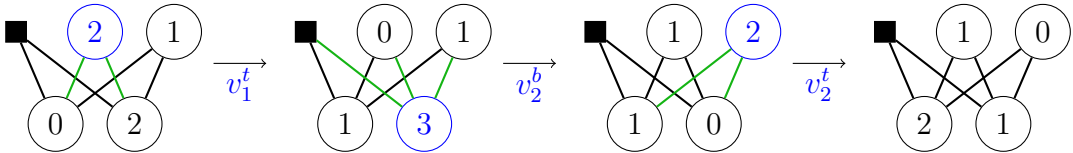

We now introduce the SSM. In this model, if a top vertex $v^t_i$ is unstable, then for each bottom neighbour $v^b_j$ we draw a Bernoulli random variable $B_j$ with parameter $p$ (the $B_j$'s are independent of each other and of all prior topplings). If $B_j = 1$, then $v^b_j$ receives one grain from $v^t_i$ when it topples, otherwise vertex $v^t_i$ keeps that grain. The process is the same for toppling a bottom vertex $v^b_j$ (as in the ASM we include the sink $v^t_0$ in the set of neighbours). The sink still acts as the system's exit point.

As in the ASM, one can show (see~\cite[Theorem 2.2]{CMS}) that, starting from an unstable configuration $c$ and successively toppling unstable vertices, we eventually reach a (random) stable stochastic configuration $c'$.
Moreover, the stochastic configuration $c'$ reached does not depend on the order in which vertices are toppled. We write $c' = \StoStab{c}$ and call it the \emph{stochastic stabilisation} of $c$. Figure~\ref{fig:Stostab} shows one possible example of the stochastic stabilisation process for the configuration $c = (2, 1; 0, 2)$. Here, \textcolor{blue}{blue} vertices are unstable, \textcolor{mygreen}{green} edges represent grains being sent from an unstable vertex to a neighbour, while \textcolor{red}{red} edges represent no movement of grain.

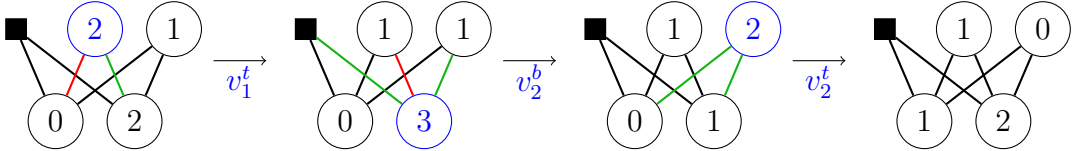
\begin{figure}[ht]
\centering
 
 \begin{tikzpicture}[scale=0.35]
 \sink
 \node [draw, circle, color=blue] (1) at (3,0) {2};
 \node [draw, circle] (2) at (6,0) {1};
 \node [draw, circle] (3) at (1.5,-3.5) {0};
 \node [draw, circle] (4) at (4.5,-3.5) {2};
 \foreach \xx in {0,2}
   \foreach \yy in {3,4}
     \draw [thick] (\xx)--(\yy);
 \draw [thick, red] (1)--(3);
 \draw [thick, mygreen] (1)--(4);
 \draw [->] (7.5,-1.2)--(9.5,-1.2);
 \node at (8.5,-2) {\textcolor{blue}{$v^t_1$}};
 
 \begin{scope}[shift={(11,0)}]
 \sink
 \node [draw, circle] (1) at (3,0) {1};
 \node [draw, circle] (2) at (6,0) {1};
 \node [draw, circle] (3) at (1.5,-3.5) {0};
 \node [draw, circle, color=blue] (4) at (4.5,-3.5) {3};
 \foreach \xx in {0,1,2}
   \foreach \yy in {3}
     \draw [thick] (\xx)--(\yy);
 \draw [thick, mygreen] (4)--(0);
 \draw [thick, red] (4)--(1);
 \draw [thick, mygreen] (4)--(2);
 \draw [->] (7.5,-1.2)--(9.5,-1.2);
 \node at (8.5,-2) {\textcolor{blue}{$v^b_2$}};
 \end{scope}
 
 \begin{scope}[shift={(22,0)}]
 \sink
 \node [draw, circle] (1) at (3,0) {1};
 \node [draw, circle, color=blue] (2) at (6,0) {2};
 \node [draw, circle] (3) at (1.5,-3.5) {0};
 \node [draw, circle] (4) at (4.5,-3.5) {1};
 \foreach \xx in {0,1}
   \foreach \yy in {3,4}
     \draw [thick] (\xx)--(\yy);
 \draw [thick, mygreen] (2)--(3);
 \draw [thick, mygreen] (2)--(4);
 \draw [->] (7.5,-1.2)--(9.5,-1.2);
 \node at (8.5,-2) {\textcolor{blue}{$v^t_2$}};
 \end{scope}
 
 \begin{scope}[shift={(33,0)}]
 \sink
 \node [draw, circle] (1) at (3,0) {1};
 \node [draw, circle] (2) at (6,0) {0};
 \node [draw, circle] (3) at (1.5,-3.5) {1};
 \node [draw, circle] (4) at (4.5,-3.5) {2};
 \foreach \xx in {0,1,2}
   \foreach \yy in {3,4}
     \draw [thick] (\xx)--(\yy);
 \end{scope}
 
 \end{tikzpicture}
 
 \caption{Illustrating a possible stochastic stabilisation for $c = (2,1; 0,2) \in \Config{2}{2}$. Vertices under the arrows represent the vertex being toppled in that phase. \label{fig:Stostab}}

\end{figure}

For both ASM and SSM, we define a Markov chain on the set $\Stablemn$. At each step, we add a grain to a non-sink vertex of $K_{m,n}^0$, chosen uniformly at random, and stabilise the resulting configuration according to the chosen model's rules. A configuration $c$ is called \emph{deterministically recurrent} (DR), resp.\ \emph{stochastically recurrent} (SR) if it appears infinitely often in the long-time running of this Markov chain for the ASM, resp.\ SSM. We denote by $\DetRecmn$, resp.\ $\StoRecmn$, the set of DR, resp.\ SR, configurations on $K_{m,n}^0$. 

A popular statistic for a recurrent configuration $c$ is its \emph{level}, defined by:
\begin{equation}\label{eq:def_level}
\level{c} := \sum\limits_{i \in [m]} c^t_i + \sum\limits_{j \in [n]} c^b_j - m\!\cdot \! n,
\end{equation}
which satisfies $0 \leq \level{c} \leq m(n-1)$ (see e.g.~\cite[Equation~(8)]{SelSSM}).

The symmetries of $K_{m,n}^0$ make it natural to study both $\DetRecmn$ and $\StoRecmn$ up to re-ordering in each part. We therefore say that a configuration $c = (c^t; c^b) \in \Configmn$ is \emph{sorted} if $c^t$ and $c^b$ are both weakly increasing. We denote by $\DetSortRecmn$, resp.\ $\StoSortRecmn$, the set of sorted DR, resp.\ sorted SR, configurations for the ASM, resp.\ SSM.

Our paper is organised as follows. In Section~\ref{sec:SSM_compbipart} we provide a necessary and sufficient condition for a configuration on $K_{m,n}^0$ to be SR in terms of a certain sequence of inequalities (Theorem~\ref{thm:charac}). This then yields a so-called \emph{stochastic burning algorithm} for complete bipartite graphs that runs in linear time (Theorem~\ref{thm:burning}). In Section~\ref{sec:Ferrers} we provide combinatorial characterisations of SR configurations in terms of pairs of \emph{compatible Ferrers diagrams} (Theorem~\ref{thm:bij_rec_fer}). Section~\ref{sec:ASM_compbipart} examines equivalent of this new combinatorial description for the ASM, tying it to previous work by Dukes and Le Borgne~\cite{DLB}.

\section{Stochastically recurrent configurations on complete bipartite graphs}\label{sec:SSM_compbipart}

In this section, we give a characterisation of SR configurations on $K_{m,n}^0$. We use this to describe a \emph{stochastic burning algorithm}, which checks in linear time if a given configuration is SR or not.

\subsection{Characterisation of $\StoRecmn$}\label{subsec:charac}

In~\cite[Theorem~2.6]{SelSSM}, the first author gave a characterisation of SR configurations on general graphs. We begin by restating this in the complete bipartite graph case.

\begin{theorem}\label{thm:charac_forbidden_subconfig}
Let $c = (c^t;c^b) \in \Stablemn$ be a stable configuration on $K_{m,n}^0$. Then $c \in \StoRecmn$ if, and only if, for all subsets $A \subseteq [m], B \subseteq [n]$, we have:
\begin{equation}\label{eq:forbidden_subconfig}
\sum\limits_{i \in A} c^t_i + \sum\limits_{j \in B} c^b_j \geq \vert A \vert \cdot \vert B \vert.
\end{equation}
If $A,B$ do \emph{not} satisfy Inequality~\eqref{eq:forbidden_subconfig}, we say that $(A,B)$ is a \emph{forbidden subconfiguration}.
\end{theorem}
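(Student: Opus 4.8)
The plan is to derive Theorem~\ref{thm:charac_forbidden_subconfig} directly from the general characterisation of SR configurations in~\cite[Theorem~2.6]{SelSSM}, rather than reprove anything from scratch. That general result characterises SR configurations on an arbitrary graph with sink through a family of inequalities indexed by the subsets of non-sink vertices: a stable configuration is SR if, and only if, for every nonempty set $S$ of non-sink vertices the total number of grains on $S$ is at least the number of edges of the subgraph induced by $S$, i.e.\ $\sum_{v \in S} c_v \geq e(S)$, where $e(S)$ denotes the number of edges with both endpoints in $S$ (equivalently, a configuration fails to be SR exactly when some such $S$ is a forbidden subconfiguration). So the first step is simply to recall this statement precisely in the notation of $K_{m,n}^0$.

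The second step is to set up the correspondence between subsets of non-sink vertices of $K_{m,n}^0$ and pairs $(A,B)$ with $A \subseteq [m]$ and $B \subseteq [n]$. A set $S$ of non-sink vertices is determined by which top and which bottom vertices it contains, so putting $A := \{ i \in [m] : v^t_i \in S \}$ and $B := \{ j \in [n] : v^b_j \in S \}$ yields a bijection $S \leftrightarrow (A,B)$; note the sink $v^t_0$ is automatically excluded, matching the restriction $A \subseteq [m]$. Under this correspondence the left-hand side of the general inequality becomes $\sum_{v \in S} c_v = \sum_{i \in A} c^t_i + \sum_{j \in B} c^b_j$, which is exactly the left-hand side of Inequality~\eqref{eq:forbidden_subconfig}.

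The third step is to evaluate $e(S)$. Since $K_{m,n}^0$ is bipartite there are no edges among top vertices and none among bottom vertices, so the subgraph induced on $S$ is the complete bipartite graph between the top vertices indexed by $A$ and the bottom vertices indexed by $B$, whence $e(S) = \vert A \vert \cdot \vert B \vert$, precisely the right-hand side of~\eqref{eq:forbidden_subconfig}. Quantifying over all nonempty $S$ is therefore the same as quantifying over all pairs $(A,B)$, and the degenerate cases $A = \emptyset$ or $B = \emptyset$ make~\eqref{eq:forbidden_subconfig} hold trivially (the right-hand side is $0$ while the left-hand side is a sum of non-negative integers). Thus the general criterion collapses exactly onto the stated family of inequalities, proving the theorem.

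I expect no genuine obstacle: the mathematical content lives entirely in the already-established general theorem, and the remaining work is the bookkeeping of translating subsets $S$ into pairs $(A,B)$ and handling the sink. The only point deserving a line of care is confirming that the relevant quantity is the number of edges \emph{internal} to $S$, equal here to $\vert A\vert\cdot\vert B\vert$, rather than, say, the number of edges leaving $S$; this is exactly where the bipartite, sink-free structure of the induced subgraph is used.
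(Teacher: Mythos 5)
Your proposal is correct and is exactly the argument the paper intends: the paper states this theorem without proof as a direct restatement of the general characterisation in~\cite[Theorem~2.6]{SelSSM}, and your translation (identifying $S$ with the pair $(A,B)$, computing $e(S)=\vert A\vert\cdot\vert B\vert$ for the induced complete bipartite subgraph, and noting the empty cases are vacuous) is precisely the routine specialisation being invoked.
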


In fact, because of the structure of complete bipartite graph, it is not necessary to check Inequality~\eqref{eq:forbidden_subconfig} for all subsets $A,B$. Instead, intuitively, we only need to check it for a linear proportion of such subsets. This is made precise in the following, which is the main result of this section.

\begin{theorem}\label{thm:charac}
Let $c = (c^t; c^b) \in \Stablemn$ be a stable configuration on $K_{m,n}^0$.
For $j \in [n]$, define $k_j := \vert \{i \in [m]; \, c^t_i < j \} \vert$. Then $c \in \StoRecmn$ if, and only if,
\begin{equation}\label{eq:charac}
\forall j \in [n], \, \tilde{c}^b_1 + \cdots + \tilde{c}^b_j \geq k_1 + \cdots + k_j,
\end{equation}
where $\inc{c^b} := \left( \tilde{c}^b_1, \cdots, \tilde{c}^b_n \right)$ is the non-decreasing re-arrangement of $c^b$. Moreover, if $c$ is recurrent, we have $\level{c} = c^b_1 + \cdots + c^b_n - (k_1 + \cdots + k_n)$.
\end{theorem}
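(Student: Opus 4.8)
The plan is to start from the characterisation in Theorem~\ref{thm:charac_forbidden_subconfig} and collapse its exponentially many constraints down to the $n$ inequalities of \eqref{eq:charac}. First I would note that for subsets of fixed sizes $\vert A \vert = a$ and $\vert B \vert = b$ the right-hand side $ab$ of \eqref{eq:forbidden_subconfig} is fixed, so the inequality is hardest when its left-hand side is smallest, which happens exactly when $A$ consists of the $a$ smallest entries of $c^t$ and $B$ of the $b$ smallest entries of $c^b$. Writing $\inc{c^t} = (\tilde{c}^t_1, \ldots, \tilde{c}^t_m)$ for the non-decreasing rearrangement of $c^t$, this reduces recurrence to the family
\begin{equation*}
\sum_{i=1}^a \tilde{c}^t_i + \sum_{j=1}^b \tilde{c}^b_j \geq ab \qquad \text{for all } a \in \{0, \ldots, m\}, \ b \in \{0, \ldots, n\},
\end{equation*}
where $a = 0$ or $b = 0$ is trivial since configurations are non-negative.

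Next I would fix $b$ and optimise over $a$. Setting $f(a) := \sum_{i=1}^a \tilde{c}^t_i - ab$ with $f(0) = 0$, the increment $f(a) - f(a-1) = \tilde{c}^t_a - b$ is negative exactly while $\tilde{c}^t_a \leq b-1$, i.e.\ while $a \leq k_b$ (the entries are integer-valued, so $\tilde{c}^t_a < b \iff \tilde{c}^t_a \leq b-1$), and non-negative thereafter. Hence $f$ attains its minimum at $a = k_b$, and among all constraints with second size $b$ the tightest one reads $\sum_{j=1}^b \tilde{c}^b_j \geq k_b \cdot b - \sum_{i=1}^{k_b} \tilde{c}^t_i$. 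It then remains only to identify this right-hand side with $k_1 + \cdots + k_b$.

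The key step, and the one I expect to be the main obstacle, is the combinatorial identity
\begin{equation*}
k_b \cdot b - \sum_{i=1}^{k_b} \tilde{c}^t_i = k_1 + k_2 + \cdots + k_b.
\end{equation*}
I would prove it by double counting: the $k_b$ smallest entries $\tilde{c}^t_1, \ldots, \tilde{c}^t_{k_b}$ are precisely those with $\tilde{c}^t_i \leq b-1$, so the left-hand side equals $\sum_{i=1}^{k_b}(b - \tilde{c}^t_i) = \sum_{i=1}^{k_b} \sum_{j=1}^b \mathds{1}[\tilde{c}^t_i < j]$. Swapping the order of summation and observing that for each $j \leq b$ the inner count is $\vert \{ i \in [m] : \tilde{c}^t_i < j \} \vert = k_j$ (all such indices lie in $\{1, \ldots, k_j\} \subseteq \{1, \ldots, k_b\}$) yields $\sum_{j=1}^b k_j$, as claimed. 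This is in essence the conjugate-partition relationship between the counts $k_j$ and the sorted profile $\tilde{c}^t$, and the delicate point is getting the index bookkeeping exactly right. Combining the three steps establishes the equivalence between recurrence and \eqref{eq:charac}.

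Finally, for the level formula I would specialise the identity to $b = n$. As every top vertex is stable we have $\tilde{c}^t_i \leq n-1$, hence $k_n = m$, and the identity becomes $mn - \sum_{i=1}^m c^t_i = k_1 + \cdots + k_n$, i.e.\ $\sum_{i \in [m]} c^t_i = mn - (k_1 + \cdots + k_n)$. Substituting this into the definition \eqref{eq:def_level} of the level gives at once $\level{c} = \sum_{j \in [n]} c^b_j - (k_1 + \cdots + k_n)$; I would remark that this final part uses only stability and not recurrence, the latter hypothesis being present merely because the level is of interest for recurrent configurations.
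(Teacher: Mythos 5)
Your proposal is correct, and it reaches the same endpoint as the paper's proof by a genuinely different route for the key reduction. Both arguments start from Theorem~\ref{thm:charac_forbidden_subconfig} and both rely on the identity $\sum_{i=1}^{k_j}\tilde{c}^t_i = j\,k_j - (k_1+\cdots+k_j)$ (the paper's Equation~\eqref{eq:top-sum}, which it proves by telescoping over the blocks where $c^t_i$ is constant, and which you prove by double counting the pairs $(i,j')$ with $\tilde{c}^t_i < j' \leq j$ --- the conjugate-partition identity; both derivations are fine). Where you differ is in how the exponentially many constraints of~\eqref{eq:forbidden_subconfig} are collapsed: the paper takes a \emph{minimal} forbidden pair $(A,B)$, shows $B$ may be replaced by an initial segment $[q]$, then $A$ by $[p]$, and finally compares $p$ with $k_q$ in a two-case analysis to conclude that $([k_q],[q])$ is forbidden. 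You instead observe directly that for fixed sizes $(a,b)$ the left-hand side is minimised at the sorted prefixes, and then, for fixed $b$, that $a \mapsto \sum_{i=1}^a \tilde{c}^t_i - ab$ has increments $\tilde{c}^t_a - b$ that change sign exactly at $a = k_b$, so the binding constraint is $a = k_b$. This two-stage optimisation handles both directions of the equivalence at once and dispenses with the minimality and case analysis, so it is arguably cleaner; the paper's extremal argument has the mild advantage of generalising more readily to settings where one cannot freely permute within each part. Your closing remark that the level identity uses only stability (via $k_n = m$), not recurrence, is also accurate and matches the paper's computation.
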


\begin{proof}
It is sufficient to show the result when $c$ is sorted, in which case, for $j \geq 0$, we have $c^t_i = j$ if, and only if, $k_j < i \leq k_{j+1}$ (with the convention $k_0 = 0$). Fix $j \in [n]$. We have:
\begin{equation}\label{eq:top-sum} 
\sum\limits_{i = 1}^{k_j} c^t_i = 0\!\cdot\!(k_1 - k_0) + 1\!\cdot\!(k_2 - k_1) + \cdots + (j-1)\!\cdot\!(k_j - k_{j-1}) = j\!\cdot\!k_j - (k_1 + \cdots + k_j).
\end{equation}
In particular, if we take $A = [k_j]$ and $B = [j]$, we have
\begin{align*}
\sum\limits_{i \in A} c^t_i + \sum\limits_{j' \in B} c^b_{j'} & = \sum\limits_{i = 1}^{k_j} c^t_i + \sum\limits_{j'
 = 1}^{j} c^b_{j'} \\
  & =  j\!\cdot\!k_j - (k_1 + \cdots + k_j) + (c^b_1 + \cdots + c^b_j) \\
  & =  \vert A \vert \cdot \vert B \vert + (c^b_1 + \cdots + c^b_j) - (k_1 + \cdots + k_j).
\end{align*} It follows that Inequality~\eqref{eq:charac} is equivalent to Inequality~\eqref{eq:forbidden_subconfig} when we take $A = [k_j], B = [j]$. It therefore suffices to show that if there exists a forbidden subconfiguration $(A,B)$ for $c$, then there exists $j$ such that $([k_j], [j])$ is a forbidden subconfiguration for $c$.

For this, take $B \subseteq [n]$ to be a minimal subset such that there exists $A \subseteq [m]$ with $(A,B)$ forbidden, that is, $\sum\limits_{i \in A} c^{t}_{i} + \sum\limits_{j \in B} c^b_{j} < \vert A \vert \cdot \vert B \vert$. Let $q := \max B$. We claim that for any $q' < q$, we have $c^b_{q'} < \vert A \vert$.
Otherwise, if $c^b_{q'} \geq \vert A \vert$, let $B' := [q'] \cap B \subsetneq B$. Since $c$ is assumed to be sorted, by using $c^b_j \geq c^b_{q'} \geq \vert A \vert$ for $j \in (q', q]$, we get $\sum\limits_{j \in B'}c^b_{j} = \sum\limits_{j \in B} c^b_{j} - \sum\limits_{j \in (q',q] \cap B} c^b_{j} \leq \sum\limits_{j \in B} c^b_{j} - \vert A \vert \cdot \vert B\setminus B' \vert $. Then we have:
\begin{align*}
\sum\limits_{i \in A}c^{t}_{i} + \sum\limits_{j \in B'} c^b_{j} & \leq \sum\limits_{i \in A} c^{t}_{i} + \sum\limits_{j \in B} c^b_{j} - \vert A \vert \cdot \vert B\setminus B' \vert \\
 & < \vert A \vert \cdot \vert B \vert - \vert A \vert \cdot \vert B \setminus B' \vert \\
 & = \vert A \vert \cdot \vert B' \vert,
\end{align*}
which implies that $(A,B')$ is a forbidden subconfiguration. But since $B' \subsetneq B$, this contradicts the minimality of $B$. This proves our claim that for any $q' < q$, we have $c^b_{q'} < \vert A \vert$, and in particular this holds for $q' \in [q] \setminus B$. 

We therefore get:
\begin{align*}
\sum\limits_{i \in A} c^{t}_{i} + \sum\limits_{j \in [q]} c^b_{j} & = \sum\limits_{i \in A} c^{t}_{i} + \sum\limits_{j \in B} c^b_{j} + \sum\limits_{j \in [q] \setminus B} c^b_{j} \\
 & < \sum\limits_{i \in A} c^{t}_{i} + \sum\limits_{j \in B} c^b_{j} + \vert A \vert \cdot \vert [q] \setminus B \vert \\
 & < \vert A \vert \cdot \vert B \vert + \vert A \vert \cdot \vert [q] \setminus B \vert \\
 & = \vert A \vert \cdot \vert [q] \vert,
\end{align*}
which implies that $(A, [q])$ is also forbidden, i.e., we can choose $B$ of the form $B = [q]$. By a similar argument (considering $A$ as a minimal subset such that $(A, [q])$ is forbidden), we can choose $A = [p]$.
From there, it is straightforward to check that $([k_q], [q])$ is also forbidden, by distinguishing the cases $p > k_q$ and $p < k_q$.
\begin{description}
\item[Case $p > k_q$] By definition of $k_q$, for any $i \in (k_q, p]$ we have $c^{t}_i \geq q = \vert [q] \vert$ (since $c^t$ is assumed sorted). This implies that
\begin{align*}
\sum\limits_{i \in [k_q]} c^t_{i} + \sum\limits_{j \in [q]} c^{b}_{j} & = \sum\limits_{i \in [p]} c^{t}_{i} + \sum\limits_{j \in [q]} c^b_{j} - \sum\limits_{i \in (k_q, p]} c^t_{i} \\
 & \leq \sum\limits_{i \in [p]} c^{t}_{i} + \sum\limits_{j \in [q]} c^b_{j} - (p - k_q)\!\cdot\!q \\
 & < p\!\cdot\!q - (p - k_q)\!\cdot\!q \\
 & = k_q\!\cdot\!q = \vert [k_q] \vert \cdot \vert [q] \vert,
\end{align*}
as desired.
\item[Case $p < k_q$] Again, by definition of $k_q$, for any $i \in (p, k_q]$, we have $c^{t}_i < \vert [q] \vert$, which implies: 
\begin{align*}
\sum\limits_{i \in [k_q]} c^t_{i} + \sum\limits_{j \in [q]} c^{b}_{j} & = \sum\limits_{i \in [p]} c^{t}_{i} + \sum\limits_{j \in [q]} c^b_{j} + \sum\limits_{i \in (p, k_q]} c^{t}_{i} \\
& < p\!\cdot\!q + (k_q - p)\!\cdot\!q \\
& = \vert [k_q] \vert \cdot \vert [q] \vert,
\end{align*}
as desired.
\end{description}
Finally, we have shown that if there exists a forbidden subconfiguration $(A,B)$ for $c$, then there exists $j$ such that $([k_j], j)$ is forbidden for $c$. This completes the proof of the characterisation of SR configurations.

The level formula then follows from the definition of the statistic in Equation~\eqref{eq:def_level}, and Equation~\eqref{eq:top-sum} with $j = n$, noting that $k_n = m$, which yields:
\begin{align*}
\level{c} & := \sum\limits_{i \in [m]} c^t_i + \sum\limits_{j \in [n]} c^b_j - m\!\cdot \! n \\
& = n\!\cdot\!m - (k_1 + \cdots + k_n) + \sum\limits_{j \in [n]} c^b_j - m\!\cdot \! n \\
& = c^b_1 + \cdots + c^b_n - (k_1 + \cdots + k_n),
\end{align*}
as desired.
\end{proof}

\begin{example}\label{ex:SR_config}
Consider the stable configuration $c = (c^t;c^b) = (3,1,3,2,3;2,0,4,3) \in \Stable{5}{4}$. We first sort the bottom part $c^b$ and get $c^b = \inc{c^b} = (0,2,3,4)$. Next, we calculate the vector $k$ as in the statement of Theorem~\ref{thm:charac}, yielding $k = (0, 1, 2, 5)$. We then check Condition~\eqref{eq:charac} for $j=1, 2, 3, 4$.
\begin{itemize}[topsep=2pt]
\item For $j = 1$, we have $\tilde{c^b_1} = 0 \geq 0 = k_1$.
\item For $j = 2$, we have $\tilde{c^b_1} + \tilde{c^b_2} = 0 + 2 = 2 \geq 1 = 0 + 1 = k_1 + k_2$.
\item For $j = 3$, we have $\tilde{c^b_1} + \tilde{c^b_2} +\tilde{c^b_3} = 5 \geq 3 = k_1 + k_2 + k_3$.
\item For $j = 4$, we have $\tilde{c^b_1} + \tilde{c^b_2} + \tilde{c^b_3} + \tilde{c^b_4} = 9 \geq 8 = k_1 + k_2 + k_3 + k_4$.
\end{itemize}
Therefore the configuration $c$ is SR by Theorem~\ref{thm:charac}, and we have $\level{c} = 9-8 = 1$.
\end{example}

\subsection{A stochastic burning algorithm for complete bipartite graphs}
\label{subsec:burning_SSM}

We now exhibit an algorithm to check if a given stable configuration $c \in \Stablemn$ is recurrent or not. The first step (Algorithm~\ref{algo:k}) is to calculate the vector $k = (k_1, \cdots, k_n)$ as defined in Theorem~\ref{thm:charac}. The second step (Algorithm~\ref{algo:burning}) then gives the desired check. The terminology \emph{burning algorithm} refers to Dhar's process for the deterministic ASM (see~\cite[Section~6.2]{Dhar}).

\begin{algorithm}
\caption{Pre-processing: calculating the vector $k = (k_1, \cdots, k_n)$}\label{algo:k}
  \begin{algorithmic}
      \Require $n \in \N$; $c^t = (c^t_1, \cdots, c^t_m) \in \{0, \cdots, n-1\}^m$
      \State \textbf{Initialise:} $k' = (k'_0, \cdots, k'_{n-1}) = (0, \cdots, 0)$; $k = (k_1, \cdots, k_n) = (0, \cdots, 0)$; $\mathrm{sum} = 0$
      \For {$i$ from $1$ to $m$} 
        \State $k'_{c^t_i} \gets k'_{c^t_i} + 1$ \Comment{Calculate $k'_j := \vert \{ i \in [m]; \, c^t_i = j \} \vert$}
      \EndFor
      \For {$j$ from $1$ to $n$}
        \State $\mathrm{sum} \gets \mathrm{sum} + k'_{j-1}$; $k_j \gets \mathrm{sum}$
      \EndFor \\
      \Return $k = (k_1, \cdots, k_n)$
  \end{algorithmic}
\end{algorithm}

\begin{proposition}\label{pro:algo_k}
Algorithm~\ref{algo:k} outputs the vector $k = (k_1, \cdots, k_n)$ defined by $k_j = \vert \{i \in [m]; \, c^t_i < j \} \vert$ for any $j \in [n]$. Moreover, the algorithm runs in $O(m+n)$ time.
\end{proposition}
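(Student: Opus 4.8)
The plan is to establish correctness and the running-time bound separately, in both cases by a direct inspection of the two \texttt{for} loops; the argument is pure bookkeeping, so I would keep it brief.

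First I would analyse the first loop, whose role is to build the \emph{histogram} of $c^t$. All entries of $k'$ start at $0$, and the loop performs exactly one increment $k'_{c^t_i} \gets k'_{c^t_i} + 1$ for each $i \in [m]$. A simple count therefore shows that on termination $k'_j = |\{i \in [m]; \, c^t_i = j\}|$ for every $j \in \{0, \ldots, n-1\}$, as recorded in the pseudocode comment. Here I would note that the input constraint $c^t \in \{0, \ldots, n-1\}^m$ (which holds automatically for a stable configuration, since $c^t_i < n$) guarantees that every increment targets a valid index, so no top vertex is dropped.

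Next I would analyse the second loop, which converts $k'$ into its vector of \emph{prefix sums}. I would argue by a short induction on $j$ that at the end of the $j$-th iteration the accumulator satisfies $\mathrm{sum} = k'_0 + \cdots + k'_{j-1}$, whence $k_j = k'_0 + \cdots + k'_{j-1}$; both the base case and the inductive step are immediate from the single update $\mathrm{sum} \gets \mathrm{sum} + k'_{j-1}$. Combining the two loops then yields, for each $j \in [n]$,
\[
k_j \;=\; \sum_{l=0}^{j-1} k'_l \;=\; \sum_{l=0}^{j-1} \bigl| \{i \in [m]; \, c^t_i = l\} \bigr| \;=\; \bigl| \{i \in [m]; \, c^t_i < j\} \bigr|,
\]
which is precisely the defining formula for $k_j$ from Theorem~\ref{thm:charac}, establishing correctness.

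For the running time I would simply tally the operations: initialising the two length-$n$ arrays costs $O(n)$, the first loop performs $m$ constant-time updates for a cost of $O(m)$, and the second loop performs $n$ constant-time updates for a cost of $O(n)$; adding these gives the claimed $O(m+n)$ bound. There is no genuine obstacle in this proof — the only points demanding any care are confirming the index ranges (that $c^t_i$ always lands in $\{0, \ldots, n-1\}$, so that the first loop is well defined) and being explicit that each array read, array write, and addition is a constant-time operation under the usual uniform-cost model.
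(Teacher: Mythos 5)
Your proof is correct and follows exactly the paper's own argument: the first loop computes the histogram $k'_j = |\{i \in [m];\, c^t_i = j\}|$ in $O(m)$ time and the second loop computes its prefix sums in $O(n)$ time, from which the formula for $k_j$ follows. The extra remarks on index validity and the cost model are fine but not needed beyond what the paper states.
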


\begin{proof}
The first loop calculates the vector $k' = (k'_0, \cdots, k'_{n-1})$ defined by $k'_j = \vert \{i \in [m]; \, c^t_i = j \} \vert$ for $0 \leq j \leq n-1$, in $O(m)$ time. The second loop then calculates the vector $k$ defined by $k_j = k'_0 + \cdots + k'_{j-1}$ for $j \in [n]$, from which the formula immediately follows, in $O(n)$ time.
\end{proof}

\begin{algorithm}
\caption{Stochastic burning algorithm for complete bipartite graphs}\label{algo:burning}
  \begin{algorithmic}
    \Require $c = (c^t; c^b) \in \Stablemn$
    \State $c^b \gets \inc{c^b}$ \Comment{Sort bottom part $c^b$ of configuration $c$}
    \State \textbf{Pre-process:} calculate vector $k = (k_1, \cdots, k_n)$ by Algorithm~\ref{algo:k}
    \State \textbf{Initialise:} $\mathrm{sumK} = 0$; $\mathrm{sumC} = 0$
    \For {$j$ from $1$ to $n$}
      \State $\mathrm{sumK} \gets \mathrm{sumK} + k_j$; $\mathrm{sumC} \gets \mathrm{sumC} + c^b_j$
      \If{$\mathrm{sumC} < \mathrm{sumK}$}
         \State \textbf{return} False
      \EndIf
    \EndFor \\
    \Return True
  \end{algorithmic}
\end{algorithm}

\begin{theorem}\label{thm:burning}
Algorithm~\ref{algo:burning} returns True if, and only if, the input (stable) configuration $c$ is stochastically recurrent. Moreover, the algorithm runs in $O(m + n)$ time.
\end{theorem}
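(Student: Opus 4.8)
The plan is to separate the two assertions—correctness and running time—and to reduce the first almost entirely to Theorem~\ref{thm:charac}. The core observation is that Algorithm~\ref{algo:burning} is a direct transcription of Condition~\eqref{eq:charac}: after sorting $c^b$ into $\inc{c^b} = (\tilde{c}^b_1, \cdots, \tilde{c}^b_n)$ and computing the vector $k$ via Algorithm~\ref{algo:k}, the main loop merely accumulates partial sums and tests the inequalities of~\eqref{eq:charac} one at a time.

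For correctness, I would argue by a loop invariant. First I would record that, by Proposition~\ref{pro:algo_k}, the pre-processing step returns the vector $k$ exactly as defined in Theorem~\ref{thm:charac}. Then I would show that immediately after the update in the $j$-th pass of the main loop, the variables satisfy $\mathrm{sumK} = k_1 + \cdots + k_j$ and $\mathrm{sumC} = \tilde{c}^b_1 + \cdots + \tilde{c}^b_j$; this is immediate by induction on $j$. Consequently the test $\mathrm{sumC} < \mathrm{sumK}$ is precisely the negation of the $j$-th inequality in~\eqref{eq:charac}. Hence the algorithm returns False exactly when some inequality in~\eqref{eq:charac} fails, and returns True exactly when all of them hold. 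By Theorem~\ref{thm:charac} the latter happens if, and only if, $c$ is stochastically recurrent, which gives the desired equivalence.

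For the running time, I would break the algorithm into its three stages. The pre-processing call to Algorithm~\ref{algo:k} runs in $O(m+n)$ by Proposition~\ref{pro:algo_k}, and the main loop makes $n$ passes each costing $O(1)$, hence $O(n)$. The one step that needs care—and the main obstacle to a linear bound—is the initial sorting of $c^b$: a general comparison sort would cost $O(n \log n)$. The key point I would stress is that the entries of $c^b$ are bounded, since stability forces $c^b_j < m+1$, i.e.\ $c^b_j \leq m$, for all $j \in [n]$. This means we may sort $c^b$ by counting (bucket) sort over the value range $\{0, 1, \cdots, m\}$, which runs in $O(m+n)$ time. Combining the three stages then yields the claimed $O(m+n)$ bound. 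I expect the only genuinely delicate point to be this sorting step; everything else is bookkeeping that follows directly from the earlier results.
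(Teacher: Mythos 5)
Your proposal is correct and follows essentially the same route as the paper: reduce correctness to Theorem~\ref{thm:charac} via the observation that the loop tests the inequalities of Condition~\eqref{eq:charac} one by one, and obtain the linear time bound by noting that stability forces $c^b_j \leq m$, so that $c^b$ can be sorted by counting sort in $O(m+n)$. Your loop-invariant formulation of the correctness step is slightly more explicit than the paper's one-line appeal to Theorem~\ref{thm:charac}, but the substance is identical.
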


\begin{proof}
The recurrence check follows from Theorem~\ref{thm:charac}. Algorithm~\ref{algo:k} calculates the vector $k$ in $O(m + n)$ time. Now note that for all $j \in [n]$, we have $0 \leq c^b_j \leq m$ (since $c$ is stable). As such, sorting $c^b$ can be done in $O(m + n)$ time by using the $\mathrm{CountSort}$ algorithm (also known as \emph{sort by values}, see e.g.~\cite[proof of Proposition~13]{CLB_Baker}). In brief, this algorithm first computes an auxiliary array $a = (a_0, \dots, a_{m})$ where $a_j = \left\vert \{i \in [n]; c^b_i = j \} \right\vert$ for all $0 \leq j \leq m$. This is analogous to the calculation of the vector $k'$ in the first step of Algorithm~\ref{algo:k}, and thus takes $O(n)$ time. We then recover $\inc{c^b}$ from $a$ in linear time by looping over $a$ and appending $a_j$ times the value $j$ to $\inc{c^b}$ at each step (since $\sum\limits_{j=0}^m a_j = n$ this takes $O(m+n)$ time). The rest of Algorithm~\ref{algo:burning} then runs in $O(n)$ time.
\end{proof}

\section{Recurrent configurations as pairs of Ferrers diagrams}
\label{sec:Ferrers}

In this section, we present a combinatorial interpretation of $\StoSortRecmn$ in terms of \emph{Ferrers diagrams}. 
A Ferrers diagram is a left-aligned collection of cells such that the number of cells in each row is weakly increasing from bottom to top (some rows may be empty). We denote by $\Fermn$, resp.\ $\Fer{\leq m}{n}$, the set of Ferrers diagrams with $m$ columns, resp.\ at most $m$ columns, and $n$ rows. The \emph{area} $\mathrm{Area}(F)$ of a Ferrers diagram $F$ is its number of cells. Given a weakly increasing sequence $s = (s_1, \cdots, s_n) \in \Zp^n$, we denote $F(s) \in \Fer{s_n}{n}$ the Ferrers diagram with $s_i$ cells in row $i$ (rows are ordered from bottom to top). For example, $F(0, 1, 4) := $ \begin{tikzpicture} [scale=0.2]
\ferrers{4,1,0}{(0,0)} 
\end{tikzpicture}
is an element of $\Fer{4}{3}$ with area $5$.

We consider the following two families of operations on Ferrers diagrams:
\begin{enumerate}
\item $\Shift$ which shifts a cell of the diagram in a given row to some row below.
\item $\Add$ which adds a cell to the right of a given row.
\end{enumerate}
A $\Shift$ or $\Add$ operation is called \emph{legal} if it still results in a Ferrers diagram (possibly with a different number of columns). Figure~\ref{fig:comp} illustrates these operations.

\begin{definition}\label{def:comp}
We say that an ordered pair $(F, F')$ of Ferrers diagrams is \emph{compatible} if $F'$ can be obtained from $F$ through a sequence of legal $\Shift$ and $\Add$ operations.
\end{definition}

We now return to our study of $\StoRecmn$. Note that the vectors $k := (k_1, \cdots, k_n)$ and $\inc{c^b} = \left( \tilde{c}^{b}_1, \cdots, \tilde{c}^{b}_n \right)$ which appear in Inequality~\eqref{eq:charac} are both weakly increasing. Moreover, we have $k_n = m$ and $\tilde{c}^b_n \leq m$ (the latter is the stability condition). This yields the following.

\begin{theorem}\label{thm:bij_rec_fer}
For a \emph{sorted} configuration $c = (c^t; c^b) \in \Configmn$, we define $\Psi(c) := \Big(F(k), F\left( c^b \right) \Big) \in \Fermn \times \Fer{\leq m}{n}$, where $k = (k_1, \cdots, k_n)$ is as in the statement of Theorem~\ref{thm:charac}. Then $\Psi$ is a bijection from $\StoSortRecmn$ to the set of compatible pairs $(F, F') \in \Fermn \times \Fer{\leq m}{n}$. Moreover, we have $\level{c} = \mathrm{Area}\big( F\left( c^b \right) \big) - \mathrm{Area} \left( F(k) \right)$, which is also the number of $\Add$ operations in a legal sequence $F(k) \leadsto F(c^b)$.
\end{theorem}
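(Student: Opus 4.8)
The plan is to derive the whole statement from Theorem~\ref{thm:charac}. Since $c$ is sorted, $\inc{c^b} = c^b$, so that theorem says $c$ is SR precisely when $\sum_{i \le j} c^b_i \ge \sum_{i \le j} k_i$ for all $j \in [n]$. For a Ferrers diagram $G$ with $n$ rows, write $S_j(G)$ for the number of cells in its bottom $j$ rows; the SR condition then reads $S_j(F(c^b)) \ge S_j(F(k))$ for all $j$. I would first record that $\Psi$ is a bijection from the set of \emph{all} sorted configurations onto $\Fermn \times \Fer{\le m}{n}$, before imposing recurrence: indeed $k_n = m$ gives $F(k) \in \Fermn$, stability gives $\tilde c^b_n \le m$ so $F(c^b) \in \Fer{\le m}{n}$, and the map inverts explicitly, since a diagram with rows $k_1 \le \cdots \le k_n = m$ recovers the sorted $c^t$ by letting the value $j-1$ occur $k_j - k_{j-1}$ times (with $k_0 := 0$), while the second diagram is read off directly as $c^b$. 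With this in hand, the theorem reduces to one claim: for two Ferrers diagrams $F, F'$ with $n$ rows, the pair $(F, F')$ is compatible if and only if $S_j(F') \ge S_j(F)$ for all $j \in [n]$.

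For the ``only if'' direction I would compute the effect of an elementary operation on the quantities $S_j$: a legal $\Add$ on row $t$ raises $S_j$ by $1$ for every $j \ge t$ and fixes the others, while a legal $\Shift$ from a row $s$ down to a row $r < s$ raises $S_j$ by $1$ for $r \le j < s$ and fixes the others. In particular no operation ever decreases an $S_j$, so any legal sequence $F \leadsto F'$ forces $S_j(F') \ge S_j(F)$ for all $j$. This same bookkeeping settles the area statement at once: $\Shift$ preserves the number of cells and $\Add$ increases it by exactly $1$, so every legal sequence $F(k) \leadsto F(c^b)$ contains exactly $\mathrm{Area}(F(c^b)) - \mathrm{Area}(F(k))$ operations of type $\Add$; by the level formula of Theorem~\ref{thm:charac} this equals $\sum_j c^b_j - \sum_j k_j = \level{c}$.

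The real content is the ``if'' direction: from $S_j(F') \ge S_j(F)$ for all $j$ I must construct a legal sequence $F \leadsto F'$. I would induct on the nonnegative integer $\Delta := \sum_{j=1}^n (S_j(F') - S_j(F))$, keeping a current diagram $G$ (with $n$ rows, and possibly more than $m$ columns, which Definition~\ref{def:comp} allows) under the invariant $S_j(G) \le S_j(F')$ for all $j$; write $g_i$ and $f'_i$ for the lengths of row $i$ of $G$ and $F'$. When $\Delta = 0$ all partial sums agree and $G = F'$. When $\Delta > 0$, if $S_n(G) < S_n(F')$ I simply $\Add$ a cell to the top row $n$: this is always legal, alters only $S_n(G)$, and cannot overshoot, so $\Delta$ drops. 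Otherwise $S_n(G) = S_n(F')$ with $G \ne F'$; I then take the smallest row $r$ with $g_r \ne f'_r$ (necessarily $g_r < f'_r$), the smallest row $s > r$ with $g_s > f'_s$ (which exists because the totals now agree), and I $\Shift$ one cell from row $s$ down to the top $r'$ of the plateau of $G$ containing row $r$.

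The crux is to verify that this $\Shift$ is \emph{legal} and does not overshoot. On the donor side, $g_{s-1} \le f'_{s-1} \le f'_s < g_s$ gives $g_{s-1} < g_s$, so row $s$ can release a cell. On the recipient side, choosing $r'$ at the top of a plateau ensures $g_{r'} < g_{r'+1}$, so row $r'$ can absorb a cell; the only delicate point is the adjacent case $r' + 1 = s$, which is handled by the inequality $g_s > f'_s \ge f'_r > g_{r'}$, giving $g_s \ge g_{r'} + 2$. For safety, the relations $g_i = f'_i$ for $i < r$, $g_r < f'_r$, and $g_i \le f'_i$ for $r < i \le s-1$ combine to yield $S_j(G) \le S_j(F') - 1$ for all $r \le j \le s-1$, and these are exactly the indices whose partial sum the $\Shift$ increases; hence no $S_j(G)$ is pushed above $S_j(F')$. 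Thus $\Delta$ strictly decreases and the invariant survives, completing the induction. I expect this legality-and-safety check for the $\Shift$ step --- especially the plateau choice and the adjacent case --- to be the one genuine obstacle, the bijection, the monotonicity direction, and the area count all being routine.
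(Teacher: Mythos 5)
Your proposal is correct and follows essentially the same route as the paper: injectivity by recovering $\inc{c^t}$ from $k$, the level formula via the area bookkeeping of $\Shift$ and $\Add$, the easy direction by checking that each legal operation can only increase the bottom-$j$ partial sums, and the hard direction by an explicit greedy construction of a legal sequence. The only divergence is in how that construction is organised --- the paper performs all $\Shift$ operations on $F(k)$ first (using the minimal violating row $p$ and the maximal surplus row $p'\le p$) to reach a diagram dominated row-by-row by $c^b$ and only then appends the $\Add$s, whereas you interleave $\Add$s to the top row with $\Shift$s under a partial-sum invariant; your version is verified in somewhat more careful detail (the decreasing quantity $\Delta$, the plateau-top choice, the adjacent-row case), but it does not yield the paper's side remark that the sequence can always be taken with all $\Shift$s preceding all $\Add$s.
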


\begin{proof}
By preceding remarks, if $c \in \StoRecmn$, then $F(k) \in \Fermn$ and $F\left(c^b\right) \in \Fer{\leq m}{n}$. To show that $\Psi$ is injective, we note that the vector $k$ uniquely defines the non-decreasing re-arrangement $\inc{c^t}$ of the top part of $c$. Indeed, let $k'_j := \vert \{ i \in [m]; \, c^t_i = j \} \vert$ for $0 \leq j \leq n-1$. Then we have $k'_j = k_{j+1} - k_j$ (with the convention $k_0 = 0$), and the vector $\inc{c^t}$ simply consists of the values $j$ repeated $k'_j$ times for $0 \leq j \leq n-1$. Moreover, the level formula follows from the level formula in Theorem~\ref{thm:charac}, which gives $\level{c} = c^b_1 + \cdots + c^b_n - (k_1 + \cdots + k_n) = \tilde{c}^b_1 + \cdots + \tilde{c}^b_n - (k_1 + \cdots + k_n) = \mathrm{Area}\big( F\left( c^b \right) \big) - \mathrm{Area} \left( F(k) \right)$. Since $\Shift$ operations leave the area of a Ferrers diagram unchanged, and $\Add$ operations increase it by one, this is also the number of $\Add$ operations in a legal sequence $F(k) \leadsto F(c^b)$.

It remains to show that $\Psi$ is surjective, i.e.\ that $c$ is SR if, and only if, $\Big(F(k), F\left( c^b \right) \Big)$ is a compatible pair of Ferrers diagrams. Suppose first that $\Big(F(k), F\left( c^b \right) \Big)$ is compatible. This means that $F\left( c^b \right)$ can be reached from $F(k)$ through a sequence of legal $\Shift$ and $\Add$ operations. As such, by Theorem~\ref{thm:charac}, it is sufficient to show the following. If $v = (v_1, \cdots, v_n)$ is such that $v_1 + \cdots + v_j \geq k_1 + \cdots + k_j$ for all $j \in [n]$, and $v' = (v'_1, \cdots, v'_n)$ is such that $F(v')$ is obtained from $F(v)$ through a single legal $\Shift$ or $\Add$ operation, then we have $v'_1 + \cdots + v'_j \geq k_1 + \cdots + k_j$ for all $j \in [n]$. In other words, Condition~\eqref{eq:charac} is preserved through applying a single legal $\Shift$ or $\Add$ operation to the Ferrers diagram of the left-hand side vector. Adding a cell to row $p$ in $F(v)$ simply increases $v_p$ by one, leaving other values unchanged, so the inequalities are clearly preserved. Shifting a cell from row $p$ to row $p' < p$ yields $v'_p = v_p - 1$, $v'_{p'} = v_{p'} + 1$, and other values unchanged. Thus the sum $v_1 + \cdots + v_j$ is increased by one if $j \in [p', p)$ and unchanged otherwise, so that Condition~\eqref{eq:charac} is preserved.

We now show the converse, namely that if $c$ is SR, then the pair $\Big(F(k), F\left( c^b \right) \Big)$ is compatible. Let $c$ be a \emph{sorted} SR configuration, and $k$ the corresponding vector. First, note that if $c^b_j \geq k_j$ for all $j \in [n]$, then $F\left( c^b \right)$ can be obtained from $F(k)$ by adding the required cells in each row from top to bottom, which is a legal sequence, and the result follows. We therefore assume this is not the case, and define $p := \min \{ j \in [n]; \, c^b_j < k_j \}$. Since $c$ is SR, we have $c^b_1 + \cdots c^b_p \geq k_1 + \cdots + k_p$ by Theorem~\ref{thm:charac}. 
Therefore there must exist $q \in [p]$ such that $c^b_q > k_q$, and we set $p'$ to be the maximal such $q$. Now note that on the one hand, we have $k_p > c^b_p \geq c^b_{p-1} \geq k_{p-1}$, i.e.\ $k_p > k_{p-1}$, while on the other hand, we have $k_{p'} < c^b_{p'} \leq c^b_{p'+1} \leq k_{p'+1}$, i.e.\ $k_{p'} < k_{p'+1}$. These two conditions combined mean that shifting a cell from row $p$ to row $p'$ in $F(k)$ is a legal operation. 
But this shift yields a Ferrers diagram whose vector $v$ satisfies $c^b_j \geq v_j$ for all $j < p$, and $v_p - c^b_p < k_p - c^b_p$. Iterating this construction will eventually allow us to reach a Ferrers diagram whose vector $v$ satisfies $v_p - c^b_p = 0$. We then iterate on the index $p$ to eventually reach a Ferrers diagram whose vector $v$ satisfies $c^b_j \geq v^b_j$ for all $j \in [m]$, which brings us back to the previous case, from where a legal series of $\Add$ additions can be performed to reach $F\left( c^b \right)$, as desired.
This completes the proof.
\end{proof}

\begin{example}\label{ex:rec_fer_comp}
Consider the sorted configuration $c = (0, 2, 2; 2, 2, 2)$. We have $k = (1, 1, 3)$, so $c$ is recurrent by Theorem~\ref{thm:charac}. Figure~\ref{fig:comp} illustrates a possible legal sequence of $\Shift$ and $\Add$ operations to go from the $k$-diagram (left) to the $c^b$-diagram (right). Note that this sequence is not unique: we could instead first add a cell in the middle row, then shift a cell from the top to the bottom row. Indeed, our proof of Theorem~\ref{thm:bij_rec_fer} implies that the sequence can always be chosen to first perform only $\Shift$ operations, followed by the $\Add$ operations.

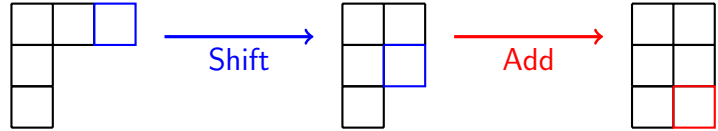
\begin{figure}[ht]
\centering
\begin{tikzpicture}[scale = 0.55]
\ferrers{2,1,1}{(0,0)}
\draw [blue, thick] (2,-1)--(3,-1)--(3,-2)--(2,-2)--(2,-1);
\draw [blue,very thick,->] (3.7,-1.8)--(7.3,-1.8);
\node [below] at (5.5,-1.8) {\textcolor{blue}{$\Shift$}};
\ferrers{2,1,1}{(8,0)}
\draw [blue, thick] (9,-2)--(10,-2)--(10,-3)--(9,-3)--(9,-2);
\draw [red,very thick,->] (10.7,-1.8)--(14.3,-1.8);
\node [below] at (12.5,-1.8) {\textcolor{red}{$\Add$}};
\ferrers{2,2,1}{(15,0)}
\draw [red, thick] (16,-3)--(17,-3)--(17,-4)--(16,-4)--(16,-3);
\end{tikzpicture}
\caption{Illustrating a legal sequence from $F(1,1,3)$ to $F(2,2,2)$, showing that the configuration $c = (0,2,2;2,2,2)$ is stochastically recurrent. \label{fig:comp}}
\end{figure}
\end{example}

\begin{remark}\label{rem:k-diag_to_config}
The top configuration $\inc{c^t}$ can be recovered from the Ferrers diagram $F(k)$ by taking the heights of East steps along the South-East border of the diagram, from the bottom-left corner to the top-right. For example, for the diagram $F(1,1,3)$ (Figure~\ref{fig:comp}, left), the South-East border can be written as $ENNEEN$, with $E$ denoting an East step, and $N$ a North one. Then we have $c^t = (0, 2, 2)$, corresponding to the heights of the $E$ steps.
\end{remark}

\begin{remark}\label{rem:all_config}
Theorem~\ref{thm:bij_rec_fer} gives a bijective representation of \emph{sorted} recurrent configurations. For the unsorted case, we label the Ferrers diagrams as follows. For the $k$-diagram $F(k)$, we assign bijectively to each column an element of $[m]$ such that columns of the same height are labelled in increasing order from left to right. Similarly, we label the rows of the $c^b$-diagram with elements of $[n]$ such that rows of the same length are labelled in increasing order from bottom to top. This yields a bijection from all recurrent configurations to the set of compatible labelled Ferrers diagrams.
\end{remark}

Finally, we propose a representation of the compatibility notion through a directed acyclic graph (DAG). The vertices of the graph are the Ferrers diagrams $F \in \Fer{\leq m}{n}$ satisfying $\mathrm{Area}(F) \geq m$. For every pair $(F,F')$, we put an edge from $F$ to $F'$ if $F' = \Shift(F)$ or $F' = \Add(F)$. As in Figure~\ref{fig:comp}, $\Shift$ edges are coloured \textcolor{blue}{blue}, and $\Add$ edges are \textcolor{red}{red}.
We denote $\DAGSmn$ the DAG thus obtained. Note that $\DAGSmn$ is \emph{bipolar}: it has a unique source $F(0, \cdots, 0, m)$ and a unique sink $F(m, \cdots, m)$. With this representation, $\Psi$ is a bijection from $\StoSortRecmn$ to pairs of vertices $(F,F')$ of $\DAGSmn$ such that $F$ has $m$ columns and there is a (directed) path from $F$ to $F'$ in $\DAGSmn$. The level of the configuration equals the number of \textcolor{red}{red} edges in such a path. Figure~\ref{fig:dag} illustrates this construction.

\begin{figure}[ht]
\centering
\begin{tikzpicture}[scale=0.35]
 \ferrers{3,0,0}{(0,0)}
 \draw [thick] (-0.1,-3)--(0.1,-3);
 \draw [blue,very thick,->] (1.2,-3)--(1.2,-5.1);
 \ferrers{2,1,0}{(0,-4.5)}
 \draw [blue,very thick,->] (0.8,-8.2)--(0.8,-9.6);
 \ferrers{1,1,1}{(0,-9)}
 \draw [red,very thick,->] (2,-3)--(5.2,-2.2);
 \draw [red,very thick,->] (2.5,-6)--(5.1,-2.7);
 \draw [red,very thick,->] (2,-7)--(5,-7);
 \draw [red,very thick,->] (2,-7.5)--(5.1,-11);
 \draw [red,very thick,->] (1.5,-11.5)--(5.2,-12);
 \ferrers{3,1,0}{(6,0.5)}
 \draw [blue,very thick,->] (7.3,-3)--(7.3,-5.1);
 \ferrers{2,2,0}{(6,-4.5)}
 \draw [blue,very thick,->] (6.9,-8.2)--(6.9,-10.1);
 \ferrers{2,1,1}{(6,-9.5)}
 \draw [blue,very thick,out=-95,in=90] (5.6,-2) to (5.4,-7);
 \draw [blue,very thick,out=-90,in=95,->] (5.4,-7) to (5.6,-12);
 \draw [red,very thick,->] (8,-2.5)--(12.1,-2);
 \draw [red,very thick,->] (8.5,-3)--(11.8,-6.7);
 \draw [red,very thick,->] (8.5,-7)--(11.8,-2.8);
 \draw [red,very thick,->] (8.5,-7.5)--(12.2,-11.4);
 \draw [red,very thick,->] (7.5,-12.2)--(12.5,-12.2);
 \draw [red,very thick,->] (8.5,-11.5)--(11.8,-7.3);
 \begin{scope}[shift={(0.8,0)}]
 \ferrers{3,2,0}{(12,0.5)}
 \draw [blue,very thick,->] (13.3,-3)--(13.3,-5.1);
 \ferrers{3,1,1}{(12,-4.5)}
 \draw [blue,very thick,->] (13.8,-7.5)--(13.8,-10.1);
 \ferrers{2,2,1}{(12.5,-9.5)}
 \draw [blue,very thick,out=-95,in=90] (11.6,-2) to (11.4,-7);
 \draw [blue,very thick,out=-90,in=105,->] (11.4,-7) to (12.1,-12);
 \draw [red,very thick,->] (14.5,-2.5)--(17.6,-6.5);
 \draw [red,very thick,->] (14.8,-2)--(17.6,-2);
 \draw [red,very thick,->] (14.5,-7)--(17.6,-7);
 \draw [red,very thick,->] (15,-11.5)--(17.6,-7.5);
 \draw [red,very thick,->] (15,-12)--(18.1,-12);
 \ferrers{3,3,0}{(18,0.5)}
 \draw [blue,very thick,->] (19.8,-3)--(19.8,-5.1);
 \ferrers{3,2,1}{(18,-4.5)}
 \draw [blue,very thick,->] (19.6,-8.2)--(19.6,-10.1);
 \ferrers{2,2,2}{(18.5,-9.5)}
 \draw [red,very thick,->] (21.5,-2.5)--(23.6,-4);
 \draw [red,very thick,->] (21.5,-6.5)--(23.6,-5);
 \draw [red,very thick,->] (21.2,-7)--(23.6,-9);
 \draw [red,very thick,->] (21,-11.5)--(23.6,-10);
 \ferrers{3,3,1}{(24,-2)}
 \draw [blue,very thick,->] (25.6,-5.5)--(25.6,-7.6);
 \ferrers{3,2,2}{(24,-7)}
 \draw [red,very thick,->] (27.5,-5)--(29.4,-6.5);
 \draw [red,very thick,->] (27.5,-9)--(29.4,-7.5);
 \ferrers{3,3,2}{(29.8,-4.5)}
 \draw [red,very thick,->] (33.3,-7)--(35,-7);
 \ferrers{3,3,3}{(35.4,-4.5)}
 \end{scope}
\end{tikzpicture}
\caption{The graph $\DAGS{3}{3}$. \label{fig:dag}}
\end{figure}

\section{Deterministically recurrent configurations on complete bipartite graphs}\label{sec:ASM_compbipart}

In this section, we return to the ASM, and its DR configurations. The ASM on complete bipartite graphs has already been extensively studied in~\cite{AADB,AADHB,DLB}. Our goal here is to describe DR configurations in terms of Ferrers diagrams, as we did for the SSM in Section~\ref{sec:Ferrers}.

\subsection{Characterisation of $\DetRecmn$}\label{subsec:ASM_charac}

We begin with a characterisation of DR configurations in the spirit of Theorem~\ref{thm:charac}.

\begin{theorem}\label{thm:charac_ASM}
Let $c = (c^t; c^b) \in \Stablemn$ be a stable configuration on $K_{m,n}^0$.
As previously, for $j \in [n]$, define $k_j := \vert \{i \in [m]; \, c^t_i < j \} \vert$. Then $c \in \DetRecmn$ if, and only if,
\begin{equation}\label{eq:charac_ASM}
\forall j \in [n], \, \tilde{c}^b_j \geq k_j,
\end{equation}
where $\tilde{c}^b_j$ is the $j$-th item from the non-decreasing re-arrangement of $c^b$.
\end{theorem}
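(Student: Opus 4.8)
The plan is to mirror the structure of the proof of Theorem~\ref{thm:charac}: reduce to the sorted case, then isolate a canonical family of ``forbidden'' configurations. The starting point is Dhar's classical burning-algorithm criterion for the ASM (see~\cite{Dhar1,Dhar}): a stable configuration $c$ is DR if, and only if, there is no nonempty set $S$ of non-sink vertices such that $c_v < \deg_S(v)$ for every $v \in S$, where $\deg_S(v)$ counts the neighbours of $v$ lying in $S$. Specialising this to $K_{m,n}^0$, such a set splits as $S = A \cup B$ with $A \subseteq \{v^t_i; \, i \in [m]\}$ and $B \subseteq \{v^b_j; \, j \in [n]\}$; since top vertices are adjacent to all bottom vertices and the sink $v^t_0$ never lies in $S$, we get $\deg_S(v^t_i) = \vert B \vert$ and $\deg_S(v^b_j) = \vert A \vert$. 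Thus the obstruction becomes: there is no pair of (necessarily nonempty) subsets $A \subseteq [m]$, $B \subseteq [n]$ with $c^t_i < \vert B \vert$ for all $i \in A$ and $c^b_j < \vert A \vert$ for all $j \in B$; call such a pair \emph{ASM-forbidden}. As both the DR property and Condition~\eqref{eq:charac_ASM} are invariant under permuting top vertices among themselves and bottom vertices among themselves (automorphisms of $K_{m,n}^0$ fixing the sink), it suffices to treat the case where $c$ is sorted, so that $\tilde{c}^b_j = c^b_j$.

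In the sorted case I will show that ASM-forbidden pairs may always be taken \emph{rectangular}, i.e.\ of the form $(A,B) = ([p],[q])$. Given any ASM-forbidden $(A,B)$ with $\vert A \vert = p$ and $\vert B \vert = q$, I replace $B$ by $[q]$ and $A$ by $[p]$ while preserving cardinalities. The key point is monotonicity: since $c^b$ is sorted and $\max B \geq q$, we have $c^b_q \leq c^b_{\max B} < p$, hence $c^b_j \leq c^b_q < p$ for all $j \in [q]$; symmetrically, $c^t_i \leq c^t_p \leq c^t_{\max A} < q$ for all $i \in [p]$. This shows $([p],[q])$ is again ASM-forbidden. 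Unlike the summed inequalities of Theorem~\ref{thm:charac}, the per-vertex conditions here make this reduction direct, with no minimality argument required; this is nonetheless the one genuinely non-bookkeeping step, and the place where sortedness must be used with care.

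It then remains to characterise rectangular forbidden pairs. Since $c^t$ is sorted, $c^t_i < q$ holds exactly for $i \leq k_q$, so the top condition ``$c^t_i < q$ for all $i \in [p]$'' is equivalent to $p \leq k_q$, while the bottom condition ``$c^b_j < p$ for all $j \in [q]$'' is equivalent to $c^b_q < p$. Hence $([p],[q])$ is ASM-forbidden if, and only if, $c^b_q < p \leq k_q$, and such a $p \geq 1$ exists precisely when $c^b_q < k_q$. Therefore an ASM-forbidden pair exists if, and only if, $c^b_q < k_q$ for some $q \in [n]$; negating, $c$ is DR exactly when $c^b_q \geq k_q$ for all $q \in [n]$, which is Condition~\eqref{eq:charac_ASM} in the sorted case, completing the proof. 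As a consistency check, this per-index condition is visibly stronger than the partial-sum Condition~\eqref{eq:charac} of Theorem~\ref{thm:charac}, recovering the expected inclusion $\DetRecmn \subseteq \StoRecmn$.
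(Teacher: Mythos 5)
Your proof is correct, and it follows the same overall architecture as the paper's (sketched) proof: invoke the forbidden-subconfiguration criterion for the ASM, reduce to sorted configurations by symmetry, and then reduce arbitrary forbidden pairs to interval pairs. The one place where you genuinely diverge is the reduction step. The paper handles the converse direction ``in analogous manner to Theorem~\ref{thm:charac}'', i.e.\ via a minimality argument on $B$ and then on $A$, followed by a case analysis converting $[p]$ to $[k_q]$. You instead observe that because the ASM obstruction is pointwise ($c^t_i < \vert B\vert$ for each $i \in A$, $c^b_j < \vert A\vert$ for each $j \in B$) rather than summed, sortedness alone lets you replace $(A,B)$ by $([p],[q])$ with $p = \vert A\vert$, $q = \vert B\vert$ in one line, with no minimality or case distinction needed; you then characterise rectangular forbidden pairs exactly as those with $c^b_q < p \leq k_q$, which packages both directions of the equivalence into a single statement. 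This is cleaner and more complete than the paper's sketch, at the mild cost of introducing the explicit Dhar/burning formulation of forbidden sets up front (which the paper also relies on, citing Dhar's Section~6). Your closing remark that the pointwise condition $\tilde{c}^b_j \geq k_j$ implies the partial-sum Condition~\eqref{eq:charac}, recovering $\DetRecmn \subseteq \StoRecmn$, is a correct and worthwhile sanity check.
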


\begin{proof}[Proof sketch]
It is sufficient to show the result for sorted configurations. Suppose that $c \in \Stablemn$ is sorted, and that there exists $j \in [n]$ such that $c^b_j < k_j$. Then we have $c^b_{j'} < k_j$ for all $j' \in [j]$. Moreover, by definition, we have $c^t_i < j$ for all $i \in [k_j]$. It follows that the configuration $c$ restricted to the induced subgraph on $([k_j], [j])$ is stable. This is exactly the definition of a forbidden subconfiguration for the ASM (see e.g.~\cite[Section~6]{Dhar}), and therefore $c$ is not recurrent. The converse can be shown in analogous manner to Theorem~\ref{thm:charac}: if there is a forbidden subconfiguration (in the above sense) $(A,B)$, we show that we can choose $A = [k_j]$ and $B = [j]$ for some $j \in [n]$, and the stability condition for $c$ then implies that $c^b_j < k_j$, as desired.
\end{proof}

Theorem~\ref{thm:charac_ASM} immediately implies the following.

\begin{corollary}
There exists an algorithm that checks in linear $O(m+n)$ time if a configuration $c \in \Configmn$ is recurrent or not.
\end{corollary}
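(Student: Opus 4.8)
The plan is to mirror the stochastic burning algorithm (Algorithm~\ref{algo:burning}), replacing its cumulative-sum check with the pointwise check dictated by Theorem~\ref{thm:charac_ASM}. By that theorem, a stable configuration $c = (c^t; c^b)$ is DR if, and only if, $\tilde{c}^b_j \geq k_j$ for every $j \in [n]$, where $\inc{c^b} = (\tilde{c}^b_1, \ldots, \tilde{c}^b_n)$ and $k_j = \vert \{i \in [m]; \, c^t_i < j\} \vert$. This is a termwise comparison of two weakly increasing vectors, which is even simpler to verify than the prefix-sum inequalities governing the SSM in Theorem~\ref{thm:charac}.

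The algorithm I propose proceeds in three phases. First, I would compute the vector $k = (k_1, \ldots, k_n)$ using Algorithm~\ref{algo:k}, which by Proposition~\ref{pro:algo_k} runs in $O(m+n)$ time. Second, I would sort the bottom part $c^b$ to obtain $\inc{c^b}$; since stability guarantees $0 \leq c^b_j \leq m$ for all $j$, this sorting can be performed in $O(m+n)$ time by the $\mathrm{CountSort}$ routine described in the proof of Theorem~\ref{thm:burning}. Third, I would loop over $j$ from $1$ to $n$, returning \textbf{False} as soon as some index with $\tilde{c}^b_j < k_j$ is found, and \textbf{True} if no such index exists; this loop clearly runs in $O(n)$ time.

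Correctness is then immediate from Theorem~\ref{thm:charac_ASM}, and summing the three phases gives a total running time of $O(m+n)$. I do not expect a genuine obstacle here: the only point requiring care is achieving linear-time sorting of $c^b$, and this is already handled by the $\mathrm{CountSort}$ argument imported from the proof of Theorem~\ref{thm:burning}. In fact the deterministic case is strictly easier than the stochastic one, since the termwise comparison avoids maintaining the running prefix sums that appear in Algorithm~\ref{algo:burning}; one can therefore reuse Algorithm~\ref{algo:burning} almost verbatim, merely replacing the accumulated test by the condition $\tilde{c}^b_j < k_j$ inside the loop.
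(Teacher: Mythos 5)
Your proposal is correct and matches the paper's intent exactly: the paper gives no explicit proof, stating only that the corollary follows immediately from Theorem~\ref{thm:charac_ASM}, and the intended argument is precisely your adaptation of Algorithms~\ref{algo:k} and~\ref{algo:burning} with the prefix-sum test replaced by the termwise test $\tilde{c}^b_j \geq k_j$. The only detail worth adding is a preliminary $O(m+n)$ stability check, since the statement admits arbitrary $c \in \Configmn$ and non-stable configurations are not recurrent by definition.
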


\subsection{Ferrers diagram interpretation}\label{subsec:Ferrers_ASM}

As in Section~\ref{sec:Ferrers}, Theorem~\ref{thm:charac_ASM} allows for a characterisation of DR configurations in terms of pairs of Ferrers diagrams. Note that this time we must have $\tilde{c}^b_n = m$ since $\tilde{c}^b_n \geq k_n$ by Equation~\eqref{eq:charac_ASM}, and $k_n = m$. Moreover, Equation~\eqref{eq:charac_ASM} means exactly that each row of $F' := F\Big( \inc{c^b} \Big)$ has a number of cells that is greater than or equal to the number of cells in the same row of $F := F(k)$.

\begin{definition}\label{def:comp_ASM}
We say that an ordered pair $(F, F')$ of Ferrers diagrams is \emph{strongly compatible} if $F'$ can be obtained from $F$ through a sequence of legal $\Add$ operations for ASM case.
\end{definition}

We then get the following characterisation of $\DetSortRecmn$ in terms of pairs of Ferrers diagrams. The proof is analogous to that of Theorem~\ref{thm:bij_rec_fer}, combined with the straightforward observation that Equation~\eqref{eq:charac_ASM} is equivalent to $\Big(F(k), F\left( c^b \right) \Big) $ being strongly compatible.

\begin{theorem}\label{thm:bij_rec_fer_ASM}
For a \emph{sorted} configuration $c = (c^t; c^b) \in \Configmn$, we define $\Psi(c) := \Big(F(k), F\left( c^b \right) \Big) \in \Fermn \times \Fermn$, where $k = (k_1, \cdots, k_n)$ is as in Theorem~\ref{thm:charac_ASM}. Then $\Psi$ is a bijection from $\DetSortRecmn$ to the set of strongly compatible pairs $(F, F') \in \Fermn \times \Fermn$. Moreover, we have $\level{c} = \mathrm{Area}\big( F\left( c^b \right) \big) - \mathrm{Area} \left( F(k) \right)$, which is also the number of $\Add$ operations in a legal sequence $F(k) \leadsto F(c^b)$.
\end{theorem}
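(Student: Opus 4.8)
The plan is to follow the blueprint of Theorem~\ref{thm:bij_rec_fer} verbatim, exploiting the single simplification that in the ASM setting no $\Shift$ operations occur, so that ``strongly compatible'' collapses to a row-by-row containment condition. The key observation I would isolate first is this: for two weakly increasing vectors $v = (v_1, \ldots, v_n)$ and $w = (w_1, \ldots, w_n)$ in $\Zp^n$, the pair $\bigl(F(v), F(w)\bigr)$ is strongly compatible (Definition~\ref{def:comp_ASM}) if, and only if, $w_j \geq v_j$ for every $j \in [n]$. The forward implication is immediate, since a legal $\Add$ increases exactly one coordinate by one and leaves the others unchanged, so after any sequence of $\Add$ operations each coordinate can only have grown. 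For the reverse implication I would reuse the ``easy case'' already treated in the proof of Theorem~\ref{thm:bij_rec_fer}: when $w_j \geq v_j$ for all $j$, one adds the required $w_j - v_j$ cells row by row, processing rows from top to bottom; at each stage the vector stays weakly increasing (the row being filled never overtakes the completed row above it, as $w$ is weakly increasing, and never drops below the row beneath it, as we only add cells), so every intermediate $\Add$ is legal.

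I would then feed this observation into Theorem~\ref{thm:charac_ASM}. Because $c$ is sorted we have $c^b = \inc{c^b}$, so $\tilde{c}^b_j = c^b_j$ and Condition~\eqref{eq:charac_ASM} reads $c^b_j \geq k_j$ for all $j$. Taking $v = k$ and $w = c^b$ (both weakly increasing, with $k_n = m$ by definition), the observation above shows that Condition~\eqref{eq:charac_ASM} is precisely the statement that $\bigl(F(k), F(c^b)\bigr)$ is strongly compatible. Combined with Theorem~\ref{thm:charac_ASM}, this gives the central equivalence $c \in \DetRecmn \iff \bigl(F(k), F(c^b)\bigr)$ strongly compatible.

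It then remains to check that $\Psi$ is a well-defined bijection and to record the level formula. For well-definedness, note that for a sorted DR configuration both diagrams land in $\Fermn$: indeed $k_n = m$, and $\tilde{c}^b_n = m$ since $\tilde{c}^b_n \geq k_n = m$ by Equation~\eqref{eq:charac_ASM} while $\tilde{c}^b_n \leq m$ by stability. Injectivity is identical to Theorem~\ref{thm:bij_rec_fer}: the vector $k$ recovers $\inc{c^t}$ via the multiplicities $k'_j = k_{j+1} - k_j$, and $F(c^b)$ recovers the sorted bottom part, so the sorted configuration $c$ is determined by its image. For surjectivity, given a strongly compatible pair $(F, F') = \bigl(F(v), F(w)\bigr) \in \Fermn \times \Fermn$, I would build $c$ by setting $c^b := w$ and letting the sorted top part contain the value $j$ with multiplicity $v_{j+1} - v_j$ for $0 \leq j \leq n-1$; since $v_n = m$ this yields exactly $m$ top entries, all in $\{0, \ldots, n-1\}$ and hence stable, one verifies $\Psi(c) = (F, F')$ directly, and $c$ is DR by the equivalence just established. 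Finally, the level formula is inherited from the computation in the proof of Theorem~\ref{thm:charac}, which used only that $c$ is sorted, giving $\level{c} = \tilde{c}^b_1 + \cdots + \tilde{c}^b_n - (k_1 + \cdots + k_n) = \mathrm{Area}\bigl(F(c^b)\bigr) - \mathrm{Area}\bigl(F(k)\bigr)$; as the ASM case uses $\Add$ operations exclusively and each raises the area by one, this difference is exactly the number of $\Add$ operations in any legal sequence $F(k) \leadsto F(c^b)$.

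The main obstacle, such as it is, lies entirely in the reverse direction of the key observation, namely certifying that the top-to-bottom filling is legal at every step; everything else is bookkeeping transplanted from the proof of Theorem~\ref{thm:bij_rec_fer}. Since there are no $\Shift$ operations to track, this step is in fact strictly simpler than its SSM counterpart, which is why the argument is genuinely analogous but easier.
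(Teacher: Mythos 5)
Your proposal is correct and follows exactly the route the paper takes: the paper's entire justification is the remark that the argument is analogous to Theorem~\ref{thm:bij_rec_fer} combined with the observation that Condition~\eqref{eq:charac_ASM} is equivalent to strong compatibility of $\bigl(F(k), F(c^b)\bigr)$, and your key observation (row-by-row containment $\Leftrightarrow$ strong compatibility, with the top-to-bottom filling certifying legality) is precisely that observation spelled out. Your writeup is in fact more detailed than the paper's, but the substance is identical.
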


\begin{example}\label{ex:rec_fer_comp_ASM}
Consider the sorted configuration $c = (0, 2, 2; 2, 2, 3)$. We have $k = (1, 1, 3)$, $\tilde{c}^b = (2, 2, 3)$, and for all $j \in \{1,2,3\}$, $\tilde{c}^b_j \geq k_j$, so $c$ is DR by Theorem~\ref{thm:charac_ASM}. 
Figure~\ref{fig:comp_ASM} shows a possible legal sequence of $\Add$ operations to go from the $k$-diagram (left) to the $c^b$-diagram (right) for $\DetRecmn$.

\begin{figure}[ht]
\centering
\begin{tikzpicture}[scale = 0.55]
\ferrers{2,1,1}{(0,0)}
\draw [thick] (2,-1)--(3,-1)--(3,-2)--(2,-2)--(2,-1);
\draw [red,very thick,->] (3.7,-1.8)--(7.3,-1.8);
\node [below] at (5.5,-1.8) {\textcolor{red}{$\Add$}};
\ferrers{3,1,1}{(8,0)}
\draw [red, thick] (9,-2)--(10,-2)--(10,-3)--(9,-3)--(9,-2);
\draw [red,very thick,->] (11.7,-1.8)--(15.3,-1.8);
\node [below] at (13.5,-1.8) {\textcolor{red}{$\Add$}};
\ferrers{3,2,1}{(16,0)}
\draw [red, thick] (17,-3)--(18,-3)--(18,-4)--(17,-4)--(17,-3);
\end{tikzpicture}
\caption{Illustrating a legal sequence from $F(1,1,3)$ to $F(2,2,3)$, showing that the configuration $c = (0,2,2;2,2,3)$ is deterministically recurrent. \label{fig:comp_ASM}}
\end{figure}
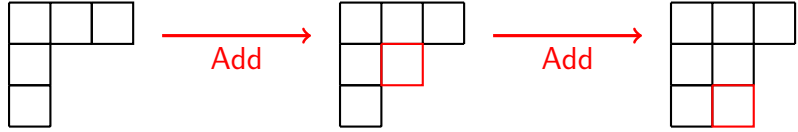
\end{example}

As in the SSM case, we can give a DAG interpretation of Theorem~\ref{thm:bij_rec_fer_ASM}. Here the vertices are all Ferrers diagrams in $\Fermn$, and we put an edge from $F$ to $F'$ if $F' = \Add(F)$. For consistency with the SSM case we colour the edges \textcolor{red}{red}. We denote $\DAGDmn$ the DAG thus obtained. $\DAGDmn$ is also \emph{bipolar} in the ASM case with the same source $F(0, \cdots, 0, m)$ and sink $F(m, \cdots, m)$. With this representation, $\Phi$ is a bijection from $\DetSortRecmn$ to pairs of vertices $(F,F')$ of $\DAGDmn$ such that there is a (directed) path from $F$ to $F'$ in $\DAGDmn$. The level of the configuration is equal to the number of (red) edges in such a path. Figure~\ref{fig:dag_ASM} illustrates this construction.

\begin{figure}[ht]
\centering
\begin{tikzpicture}[scale=0.35]
 \ferrers{3,0,0}{(0,-4.5)}
 \draw [thick] (-0.1,-7.5)--(0.1,-7.5);
 \draw [red,very thick,->] (2,-7.3)--(5.2,-7.3);
 \ferrers{3,1,0}{(6,-4.5)}
 \draw [red,very thick,->] (9.5,-6)--(11.6,-4.5);
 \draw [red,very thick,->] (9.5,-7)--(11.6,-9.5);
 \ferrers{3,2,0}{(12,-2)}
 \ferrers{3,1,1}{(12,-7)}
 \draw [red,very thick,->] (15,-4.5)--(17.6,-4.5);
 \draw [red,very thick,->] (15,-5.2)--(17.6,-8.7);
 \draw [red,very thick,->] (14.5,-9.5)--(17.6,-9.5);
 \ferrers{3,3,0}{(18,-2)}
 \ferrers{3,2,1}{(18,-7)}
 \draw [red,very thick,->] (21.5,-4.5)--(23.6,-4.5);
 \draw [red,very thick,->] (21.5,-8.7)--(23.6,-5.2);
 \draw [red,very thick,->] (21,-9.5)--(23.6,-9.5);
 \ferrers{3,3,1}{(24,-2)}
 \ferrers{3,2,2}{(24,-7)}
 \draw [red,very thick,->] (27.5,-5)--(29.4,-6.5);
 \draw [red,very thick,->] (27,-9.6)--(29.4,-7.5);
 \ferrers{3,3,2}{(29.8,-4.5)}
 \draw [red,very thick,->] (33.3,-7)--(35,-7);
 \ferrers{3,3,3}{(35.4,-4.5)}
\end{tikzpicture}
\caption{The graph $\DAGD{3}{3}$ for the ASM. \label{fig:dag_ASM}}
\end{figure}

\subsection{Connection to parallelogram polyominoes}\label{subsec:para_poly}

In this section, we use our characterisation with Ferrers diagrams to recover results from Dukes and Le Borgne~\cite{DLB} that link DR configurations on complete bipartite graphs to \emph{parallelogram polyominoes}. A parallelogram polyomino is a collection of cells in a bounding rectangle $[0,m] \times [0,n]$ which lies between two lattice paths $\U$ (the \emph{upper} path) and $\L$ (the \emph{lower} path) with the following properties:
\begin{itemize}[topsep=2pt]
\item the paths $\U$ and $\L$ go from $(0,0)$ to $(m,n)$, with steps $N = (0,1)$ and $E = (1,0)$;
\item the paths $\U$ and $\L$ do not intersect except at $(0,0)$ and $(m,n)$.
\end{itemize}
We write $P = P\left( \U, \L \right)$ for the parallelogram polyomino with upper path $\U$ and lower path $\L$. We denote $\Para{m}{n}$ the set of parallelogram polyominoes with bounding box $[0,m] \times [0,n]$. The \emph{area} of a parallelogram polyomino $P$, denoted $\mathrm{Area}(P)$, is its number of cells. Figure~\ref{fig:para_poly_ex} shows an example of a parallelogram polyomino $P \in \Para{5}{3}$ with area $10$.

\begin{figure}[ht]
\centering
\begin{tikzpicture}[scale=0.6]
 \draw (0,0) grid (5,3);
 \draw [pattern=north west lines, pattern color=gray] (0,0)--(3,0)--(3,1)--(5,1)--(5,3)--(2,3)--(2,2)--(1,2)--(1,1)--(0,1)--cycle;
 \draw [blue, very thick] (0,0)--(0,1)--(1,1)--(1,2)--(2,2)--(2,3)--(5,3);
 \node [blue] at (0.6,1.4) {$\U$};
 \draw [purple, very thick] (0,0)--(3,0)--(3,1)--(5,1)--(5,3);
 \node [purple] at (3.4,0.6) {$\L$};
 \end{tikzpicture}
 
 \caption{An example of a parallelogram polyomino $P \in \Para{5}{3}$ with upper path $\U$ (in \textcolor{blue}{blue}), lower path $\L$ (in \textcolor{purple}{purple}), and $\mathrm{Area}(P) = 10$.\label{fig:para_poly_ex}}

\end{figure}

Dukes and Le Borgne in~\cite{DLB} described a bijection from $\DetSortRecmn$ to $\Para{m+1}{n}$ as follows. Given a \emph{sorted} configuration $c = (c^t;c^b) \in \Stablemn$, they defined two lattice paths.
\begin{itemize}[topsep=2pt]
\item The upper path $\U = \U\left( c^t \right)$ is the path from $(0,0)$ to $(m+1,n)$ whose $E$ steps occur at heights $(1+c^t_1,\cdots,1+c^t_{m},n)$ above the $x$-axis. One may think of the final $E$ step as corresponding to the sink vertex $v^t_0$, with the convention $c^t_0=n-1$.
\item The lower path $\L = \L\left( c^b \right)$ is the path from $(0,0)$ to $(m+1,n)$ whose $N$ steps occur at heights $(1+c^b_1,\cdots,1+c^b_{n})$ to the right of the $y$-axis.
\end{itemize}

\begin{theorem}[{\cite[Theorem~3.7]{DLB}}]\label{thm:bij_rec_para}
For a sorted configuration $c = (c^t;c^b) \in \Stablemn$, let $\U =\U\left( c^t \right)$ and $\L = \L\left( c^b \right)$ be defined as above. 
Then the map $\Phi : c \mapsto P := P\left( \U, \L \right)$ is a bijection from $\DetSortRecmn$ to $\Para{m+1}{n}$. 
Moreover, if $c$ is DR, we have $\level{c} = \mathrm{Area}(P) - m - n$.
\end{theorem}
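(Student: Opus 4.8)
The plan is to translate the two boundary paths of $P = P(\U,\L)$ into the data $(k, c^b)$ and reduce the entire statement to the inequalities $\tilde{c}^b_j \geq k_j$ characterising $\DetRecmn$ in Theorem~\ref{thm:charac_ASM}. Since it suffices to treat sorted configurations, I would first record the positions of the vertical ($N$) steps of each path. For $\U = \U(c^t)$, whose $E$ steps lie at heights $(1 + c^t_1, \ldots, 1 + c^t_m, n)$, a direct count shows that the $j$-th $N$ step occurs at abscissa equal to the number of $E$ steps of height at most $j-1$, which is exactly $k_{j-1}$ (with $k_0 = 0$); so the $N$ steps of $\U$ sit at $x$-coordinates $k_0, k_1, \ldots, k_{n-1}$. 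For $\L = \L(c^b)$, the $j$-th $N$ step is at abscissa $1 + c^b_j$ by definition.

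With these positions in hand, the core of the argument is geometric. Two monotone lattice paths from $(0,0)$ to $(m+1,n)$ with $\U$ weakly above $\L$ bound a parallelogram polyomino precisely when they meet only at the two endpoints. Reading this off level by level: on the horizontal line $y = j$ the path $\U$ occupies $x \in [k_{j-1}, k_j]$ and $\L$ occupies $x \in [1+c^b_j, 1+c^b_{j+1}]$, so for an interior level $1 \leq j \leq n-1$ the two intervals are disjoint exactly when $1 + c^b_j > k_j$, i.e.\ $c^b_j \geq k_j$. The line $y = 0$ is automatically harmless since $k_0 = 0$ pins $\U$ to the corner $(0,0)$, while the line $y = n$ forces $c^b_n = m = k_n$ (so that the paths coincide only at $(m+1,n)$), which is the $j = n$ instance of the same inequality combined with stability. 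I expect this level-by-level crossing analysis, with its off-by-one bookkeeping and the boundary cases at $y = 0$ and $y = n$, to be the main obstacle. Once it is done, the conditions exactly match Theorem~\ref{thm:charac_ASM}, so $\Phi(c) \in \Para{m+1}{n}$ if and only if $c$ is DR.

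Bijectivity I would then obtain by exhibiting the inverse explicitly. From a polyomino $P \in \Para{m+1}{n}$ one reads the $N$-step abscissae of $\U$ to recover $k_0, \ldots, k_{n-1}$ (setting $k_n = m$), and the heights of the first $m$ $E$ steps of $\U$ recover $\inc{c^t}$ exactly as in Remark~\ref{rem:k-diag_to_config}; the $N$-step abscissae of $\L$ recover $c^b$. Since $c$ is sorted these determine $c$ uniquely, and the stability and monotonicity of the recovered vectors are automatic from the shape of the paths, yielding a two-sided inverse. This also makes the link to Theorem~\ref{thm:bij_rec_fer_ASM} transparent: $\U$ is (a shift of) the south-east border of $F(k)$ and $\L$ that of $F(c^b)$, so $P$ is nothing but the two Ferrers diagrams of a strongly compatible pair glued along their borders.

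Finally, for the level formula I would compute the area as a difference of areas under the two paths, $\mathrm{Area}(P) = \big(\text{area under } \U\big) - \big(\text{area under }\L\big)$, using that the area lying above and to the left of a monotone path equals the sum of the $x$-coordinates of its $N$ steps. This gives the above-left area of $\U$ as $k_1 + \cdots + k_n - m$ and that of $\L$ as $n + (c^b_1 + \cdots + c^b_n)$; subtracting each from the box area $(m+1)n$ and taking the difference yields $\mathrm{Area}(P) = m + n + \sum_j c^b_j - \sum_j k_j$. Since Theorem~\ref{thm:bij_rec_fer_ASM} (equivalently the level formula of Theorem~\ref{thm:charac}) gives $\level{c} = \sum_j c^b_j - \sum_j k_j$, this is precisely $\mathrm{Area}(P) - m - n$, as claimed.
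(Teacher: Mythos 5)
Your proof is correct, but note that the paper itself offers no proof of this statement: Theorem~\ref{thm:bij_rec_para} is imported from Dukes and Le Borgne \cite[Theorem~3.7]{DLB} and used as a black box. What you have done is re-derive it from the paper's own characterisation $\tilde{c}^b_j \geq k_j$ of Theorem~\ref{thm:charac_ASM}, and the details check out: the $N$-steps of $\U$ do sit at abscissae $k_0, k_1, \ldots, k_{n-1}$, the level-by-level disjointness test at $y=j$ gives exactly $c^b_j \geq k_j$ for $1 \leq j \leq n-1$ with the top edge forcing $c^b_n = m$ via stability, and the area computation $\mathrm{Area}(P) = m + n + \sum_j c^b_j - \sum_j k_j$ combines with $\level{c} = \sum_j c^b_j - \sum_j k_j$ to give the level formula (all consistent with Example~\ref{ex:config_to_poly}, where $\mathrm{Area}(P) = 10$, $m=4$, $n=3$, $\level{c} = 3$). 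This is precisely the reconstruction the paper only gestures at in Section~\ref{subsec:para_poly} and Theorem~\ref{thm:ferrer_para_poly}, so your route buys a self-contained proof of the Dukes--Le Borgne bijection as a corollary of the Ferrers-diagram characterisation, at the price of leaning on Theorem~\ref{thm:charac_ASM}, whose converse direction the paper itself only sketches. Two points you should make explicit to close the argument: (i) at an interior level the intervals $[k_{j-1},k_j]$ and $[1+c^b_j, 1+c^b_{j+1}]$ could a priori also be disjoint with $\L$ to the left of $\U$; rule this out by observing that at $y=1$ the $\U$-interval contains $x=0$ while the $\L$-interval starts at $x \geq 1$, and two monotone paths with common endpoints cannot swap sides without meeting; (ii) for surjectivity you need that every $P \in \Para{m+1}{n}$ has upper path beginning with $N$ and ending with $E$ (dually for $\L$), so that the recovered vectors $c^t, c^b$ are stable and the last $E$-step of $\U$ is indeed at height $n$ --- this is forced by non-intersection near the two corners, but it deserves a sentence.
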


\begin{example}\label{ex:config_to_poly}
Consider the polyomino $P$ from Figure~\ref{fig:para_poly_ex}. It has upper path $\U = \U(P) = NENENEEE$. The heights of its $E$ steps are $(1,2,3,3,3)$, which corresponds to the top configuration $c^t = (0, 1, 2, 2)$. Similarly, the lower path $\L = \L(P) = EEENEENN$ has $N$ steps at heights $(3,5,5)$, yielding $c^b = (2, 4, 4)$. Finally, we get the configuration $c = (0, 1, 2, 2; 2, 4, 4)$, which one can check is indeed DR by Theorem~\ref{thm:charac_ASM} (here we have $k = (1, 2, 4)$). 
\end{example}

In fact, we can recover this bijection by using our characterisation as pairs of strongly compatible Ferrers diagrams. Indeed, given a pair $(F_1, F_2) \in \Fermn \times \Fermn$ of strongly compatible Ferrers diagrams, we construct the parallelogram polyomino as follows. Let $F_1' \in \Fer{m+1}{n+1}$ be the Ferrers diagram obtained from $F_1$ by adding one empty row at the bottom, and an extra cell to (the right of) the top row. Let $F_2' \in \Fer{m+1}{n}$ be the Ferrers diagram obtained from $F_2$ by adding one extra cell to each row, and one ``full'' row at the top (with $m+1$ cells). Note that the pair $(F_1', F_2')$ is strongly compatible, and that both $F_1'$ and $F_2'$ have $m+1$ cells in their top row. In particular, the difference $F_2' \setminus F_1'$ consisting of cells that are in $F_2'$ but not in $F_1'$ is contained in the box $[0,m+1] \times [0,n]$. We denote $\mathrm{Diff} \left(F_1, F_2 \right) := F_2' \setminus F_1'$ the collection of cells thus obtained. We will see that $\mathrm{Diff} \left(F_1, F_2 \right)$ turns out to always be a parallelogram polyomino.

\begin{example}\label{ex:Ferrers_ParaPoly_ASM}
Consider the pair $(F_1, F_2)$ of strongly compatible Ferrers diagrams given by $F_1 = F(1, 1, 3)$ and $F_2 = F(2, 2, 3)$ (Figure~\ref{fig:Ferrers_ParaPoly}, left). This yields $F_1' = F(0,1,1,4)$ and $F_2' = F(3, 3, 4, 4)$ (Figure~\ref{fig:Ferrers_ParaPoly}, centre). Then taking the difference $\mathrm{Diff}\left(F_1, F_2 \right)$ gives the collection of shaded cells in the right of Figure~\ref{fig:Ferrers_ParaPoly}. Note that $\mathrm{Diff}\left(F_1, F_2 \right)$ is a parallelogram polyomino, as claimed above.

\begin{figure}[ht]

\centering

\begin{tikzpicture}[scale = 0.55]
\ferrers{3,1,1}{(0,0)}
\node at (1.5,-4.6) {$F_1$};
\draw [thick, ->] (3.5,-2.7)--(7.5,-2.3);
\ferrers{3,2,2}{(0,-6)}
\node at (1.5,-10.6) {$F_2$};
\draw [thick, ->] (3.5,-8.2)--(7.5,-8.8);
\ferrers{4,1,1,0}{(8,0.5)}
\node at (10,-4.6) {$F_1'$};
\draw [blue, very thick] (12,-0.5)--(12,-1.5)--(9,-1.5)--(9,-3.5)--(8,-3.5)--(8,-4.5);
\draw [thick, ->] (12.5,-2.5)--(16.5,-5);
\ferrers{4,4,3,3}{(8,-5.5)}
\node at (10,-11.1) {$F_2'$};
\draw [purple, very thick] (12,-6.5)--(12,-8.5)--(11,-8.5)--(11,-10.5)--(8,-10.5);
\draw [thick, ->] (12.8,-9)--(16.5,-6);
 \begin{scope}[shift={(17,-7.5)}]
 \begin{scope}[scale=1.2]
 \draw (0,0) grid (4,3);
 \draw [pattern=north west lines, pattern color=gray] (0,0)--(3,0)--(3,2)--(4,2)--(4,3)--(1,3)--(1,1)--(0,1)--cycle;
 \draw [blue, very thick] (0,0)--(0,1)--(1,1)--(1,3)--(4,3);
 \node [blue] at (0.6,1.4) {$\U$};
 \draw [purple, very thick] (0,0)--(3,0)--(3,2)--(4,2)--(4,3);
 \node [purple] at (3.4,0.6) {$\L$};
 \end{scope}
 \node at (2.5,-1) {$\mathrm{Diff}\left(F_1, F_2 \right) := F_2' \setminus F_1'$};
 \end{scope}
\end{tikzpicture}

\caption{Illustrating the construction from a pair $(F_1, F_2)$ of strongly compatible Ferrers diagrams to the polyomino $\mathrm{Diff} \left( F_1, F_2 \right)$. \label{fig:Ferrers_ParaPoly}}
\end{figure}
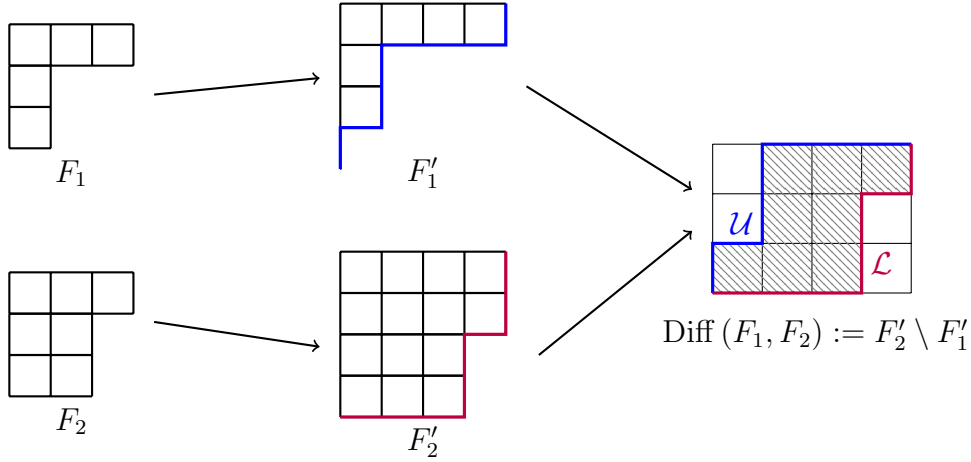

\end{example}

\begin{theorem}\label{thm:ferrer_para_poly}
We have $\Phi = \mathrm{Diff} \circ \Psi$, where $\Phi$ is the Dukes-Le Borgne bijection from sorted DR configurations to parallelogram polyominoes (Theorem~\ref{thm:bij_rec_para}) and $\Psi$ is our bijection from sorted DR configurations to pairs of strongly compatible Ferrers diagrams (Theorem~\ref{thm:bij_rec_fer_ASM}).
\end{theorem}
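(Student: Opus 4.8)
The plan is to compute, directly, the upper and lower boundary lattice paths of the region $\mathrm{Diff}\big(\Psi(c)\big) = F_2' \setminus F_1'$ and to check that they coincide with the Dukes--Le Borgne paths $\U = \U(c^t)$ and $\L = \L(c^b)$. Since a parallelogram polyomino is completely determined by its pair of bounding paths, this will simultaneously show that $F_2'\setminus F_1'$ really is a parallelogram polyomino and that it equals $P(\U,\L) = \Phi(c)$, giving $\Phi = \mathrm{Diff}\circ\Psi$. Throughout I identify a Ferrers diagram with the lattice region it occupies and work with its \emph{South-East border}: the lattice path from the bottom-left corner of its bounding box to the top-right corner separating its cells (above-left) from the empty part of the box. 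The two elementary facts I need about the South-East border of $F(s_1,\dots,s_N)$ are: (i) its North steps occur at abscissas $s_1,\dots,s_N$ (the right edge of row $i$ sits at $x=s_i$); and (ii) the multiset of heights of its East steps has each $h$ occurring $s_{h+1}-s_h$ times (with $s_0=0$), which for $F(k)$ recovers precisely $\inc{c^t}$ by Remark~\ref{rem:k-diag_to_config}.

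First I would make the auxiliary diagrams explicit: $F_1' = F(0, k_1, \dots, k_{n-1}, m+1)$ and $F_2' = F(c^b_1+1, \dots, c^b_n+1, m+1)$, both in $\Fer{m+1}{n+1}$. Their top rows are both full (width $m+1$) and cancel in the difference, while the bottom row of $F_1'$ is empty; hence $F_2'\setminus F_1'$ is supported on rows $1,\dots,n$, i.e.\ inside the box $[0,m+1]\times[0,n]$. Because both diagrams are left-justified, the difference in each row $i$ is a single contiguous block of cells, lying strictly to the right of $F_1'$ and weakly to the left of $F_2'$; strong compatibility ($c^b_i \ge k_i \ge k_{i-1}$, by Equation~\eqref{eq:charac_ASM}) guarantees these blocks are non-empty, with left endpoints $\ell_i$ and right endpoints $r_i$ both weakly increasing and satisfying $\ell_{i+1}\le r_i$. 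Thus $F_2'\setminus F_1'$ is column-convex with monotone sides, its left boundary being the South-East border of $F_1'$ and its right boundary that of $F_2'$, each truncated at height $n$.

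Next I would match these two truncated borders with the Dukes--Le Borgne paths. For the upper path, the key observation is that $F_1'$ is obtained from $F(k)$ by prepending an empty row — which shifts every East-step height up by exactly $1$, turning the heights $c^t_1,\dots,c^t_m$ supplied by Remark~\ref{rem:k-diag_to_config} into $1+c^t_1,\dots,1+c^t_m$ — and by widening the top row to $m+1$, which contributes exactly one further East step at the maximal height $n$; these are precisely the East-step heights $(1+c^t_1,\dots,1+c^t_m,n)$ defining $\U(c^t)$. For the lower path, fact (i) places the North steps of $F_2'$ at abscissas $c^b_i+1$ for $i\in[n]$ (the full top row sits at height $n+1$, outside the box, and is removed by the truncation), which are exactly the positions $(1+c^b_1,\dots,1+c^b_n)$ defining $\L(c^b)$. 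Since a path from $(0,0)$ to $(m+1,n)$ is determined by its East-step heights (resp.\ its North-step abscissas), the two boundaries equal $\U$ and $\L$, and therefore $F_2'\setminus F_1' = P(\U,\L) = \Phi(c)$.

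I expect the only delicate points to be bookkeeping rather than conceptual: keeping straight the vertical flip between the ``bottom-to-top'' Ferrers convention and the polyomino drawing, and correctly attributing each cosmetic modification in the definitions of $F_1'$ and $F_2'$ (the empty bottom row, the single extra top cell, the $+1$ per row, the full top row) to the corresponding $+1$ shift in the Dukes--Le Borgne heights. A useful check that the attribution is right is the area count: one computes $\mathrm{Area}(F_1') = \mathrm{Area}\big(F(k)\big) + 1$ and $\mathrm{Area}(F_2') = \mathrm{Area}\big(F(c^b)\big) + m + n + 1$, so that $\mathrm{Area}(P) = \mathrm{Area}(F_2') - \mathrm{Area}(F_1') = \level{c} + m + n$ by the level formula of Theorem~\ref{thm:bij_rec_fer_ASM}, recovering exactly the area statistic of Theorem~\ref{thm:bij_rec_para}.
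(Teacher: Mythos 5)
Your proposal is correct and follows essentially the same route as the paper, whose proof is only a one-line appeal to ``the various constructions combined with Remark~\ref{rem:k-diag_to_config}'': you simply carry out in full the boundary-path comparison that the paper leaves implicit, identifying the truncated South-East borders of $F_1'$ and $F_2'$ with $\U(c^t)$ and $\L(c^b)$ and checking row-nonemptiness and column-connectivity so that the difference is a genuine parallelogram polyomino. The area cross-check at the end is a nice consistency verification that the paper does not include but that confirms the bookkeeping.
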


\begin{proof}
The result follows from the various constructions combined with Remark~\ref{rem:k-diag_to_config} which details how the top configuration $c^t$ can be read from the Ferrers diagram $F(k)$.
\end{proof}

\subsection{Connection to Motzkin paths}\label{subsec:motzkin}

Finally, in this section we exhibit a bijection between sorted DR configurations and a family of labelled \emph{Motzkin paths}. A Motzkin path is a lattice path from $(0,0)$ to $(k,0)$ for some $k \geq 0$ taking steps $U = (1,1)$ (\emph{up} step), $H = (1,0)$ (\emph{horizontal} step), and $D = (1,-1)$ (\emph{down} step), which never goes below the $x$-axis. The \emph{area} of a Motzkin path $w$, denoted by $\mathrm{Area}(w)$, is the area between its curve and the $x$-axis (see Example~\ref{exa:para_motz}).  We will consider Motzkin paths whose $H$-steps are labelled either $N$ or $E$, and we denote these steps $H^N, H^E$. There is a classical bijection between parallelogram polyominoes and these labelled Motzkin paths (see~\cite[Section~3]{DV}), which we recall briefly here.

Let $P \in \Para{m+1}{n}$ be a parallelogram polyomino, with upper and lower paths $\U = (N, u_1, \cdots, u_{m+n-1}, E)$ and $\L = (E, \ell_1, \cdots, \ell_{m+n-1}, N)$. We define a lattice path $w = w(P) = w_1 \cdots w_{m+n-1}$ by setting, for each $i \in [m+n-1]$:
\begin{equation}\label{eq:poly_to_motzkin}
w_i := \begin{cases}
 U \quad \text{if } u_i = N \text{ and } \ell_i = E, \\
 H^N \quad \text{if } u_i = N \text{ and } \ell_i = N, \\
 H^E \quad \text{if } u_i = E \text{ and } \ell_i = E, \\
 D \quad \text{if } u_i = E \text{ and } \ell_i = N. \\
\end{cases}
\end{equation}
Another way of defining the path $w$ is to consider the \emph{diagonal distance} between the two paths $\U$ and $\L$. These are the distances along the lines $\Delta_i$ with equation $y = i - x$ for $i \in [1, m+n]$. For $i \in [m+n-1]$, if the distance between $\U$ and $\L$ increases from $\Delta_i$ to $\Delta_{i+1}$, we set $w_i = U$. Similarly, if the distance decreases, we set $w_i = D$, and if the distance remains the same we set $w_i = H^*$ where $* = N$ or $E$ depending on whether the line ``slides'' upwards ($* = N$) or rightwards ($* = E$).

\begin{example}\label{exa:para_motz}
Consider the parallelogram polyomino $P$ on the left of Figure~\ref{fig:exa_poly_motzkin} below. We have $\U(P) = (N)NNEEENNEE(E)$ and $\L(P) = (E)EENENENEN(N)$, putting the first and last steps of each path in parentheses as they will be ignored in the construction of the path $w$. We have $u_1 = u_2 = N$ and $\ell_1 = \ell_2 = E$, so we put $w_1 = w_2 = U$ following Equation~\eqref{eq:poly_to_motzkin}. Then $u_3 = E$ and $\ell_3 = N$, so we put $w_3 = D$. We continue through each pair of steps of the upper and lower paths, applying Equation~\eqref{eq:poly_to_motzkin} at each step. Finally, we get $w = UUDH^EDUH^NH^ED$ (Figure~\ref{fig:exa_poly_motzkin}, right). Alternatively, we can look at the diagonal distances along the lines $\Delta_i$. We have coloured the intersection of $P$ with the region between $\Delta_i$ and $\Delta_{i+1}$ and the region below the corresponding step $w_i$ using the same colour. For example, the \textcolor{red}{red} region (corresponding to $i=7$) sees the distance between $\U$ and $\L$ stay the same, ``sliding'' upwards, so the corresponding step is $w_7 = H^N$, and the region below this step in $w$ has also been coloured \textcolor{red}{red}. The area of the Motzkin path is $\mathrm{Area}(w) = \frac12 + \frac32 + \frac32 + 1 + \frac12 + \frac12 + 1 + 1 + \frac12 = 8$.

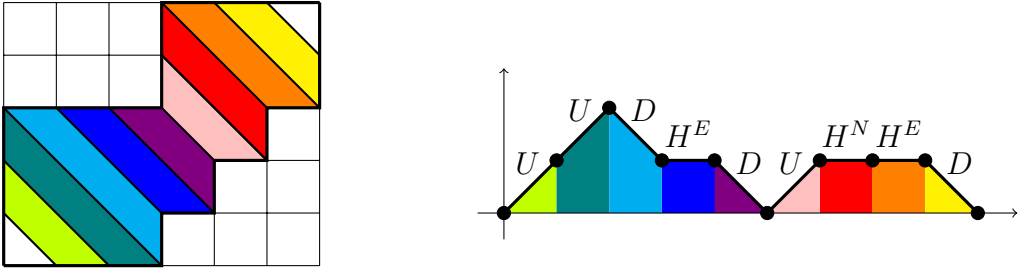
\begin{figure}[ht]
 	\centering
  \begin{tikzpicture}[scale=0.35]
   \draw [step=2] (0,-6) grid (12,-16);
	
    \fill [color=lime] (0,-12)--(0,-14)--(2,-16)--(4,-16);
    \fill [color=teal] (0,-10)--(0,-12)--(4,-16)--(6,-16);
    \fill [color=cyan] (0,-10)--(2,-10)--(6,-14)--(6,-16);
	\fill [color=blue] (2,-10)--(6,-14)--(8,-14)--(4,-10);    
    \fill [color=violet] (4,-10)--(6,-10)--(8,-12)--(8,-14);
    \fill [color=pink]  (6,-8)--(6,-10)--(8,-12)--(10,-12);    
    \fill [color=red]  (6,-6)--(6,-8)--(10,-12)--(10,-10);   
	\fill [color=orange]  (6,-6)--(8,-6)--(12,-10)--(10,-10);	
	\fill [color=yellow]  (8,-6)--(10,-6)--(12,-8)--(12,-10);	
   
    \draw [thick, color=black]  (0,-14)--(2,-16);
    \draw [thick, color=black] (0,-12)--(4,-16);
    \draw [thick, color=black] (0,-10)--(6,-16);
    \draw [thick, color=black] (2,-10)--(6,-14);
    \draw [thick, color=black] (4,-10)--(8,-14);
    \draw [thick, color=black] (6,-10)--(8,-12);
    \draw [thick, color=black]  (6,-8)--(10,-12);
    \draw [thick, color=black]  (6,-8)--(10,-12);
	\draw [thick, color=black]  (6,-6)--(10,-10);
	\draw [thick, color=black]  (8,-6)--(12,-10);
	\draw [thick, color=black]  (10,-6)--(12,-8);
   
   \draw [very thick]  (0,-16)--(6,-16)--(6,-14)--(8,-14)--(8,-12)--(10,-12)--(10,-10)--(12,-10)--(12,-6);   
   
   \draw [very thick]  (0,-16)--(0,-10)--(6,-10)--(6,-6)--(12,-6);
   
   \begin{scope}[shift={(19,-14)}]
	
    \fill [color=lime] (0,-0)--(2,0)--(2,2);
    \fill [color=teal] (2,0)--(4,0)--(4,4)--(2,2);
    \fill [color=cyan] (4,0)--(6,0)--(6,2)--(4,4);
	\fill [color=blue] (6,0)--(8,0)--(8,2)--(6,2);    
    \fill [color=violet] (8,0)--(10,0)--(8,2);
    \fill [color=pink]  (10,0)--(12,0)--(12,2);    
    \fill [color=red]  (12,0)--(14,0)--(14,2)--(12,2);   
	\fill [color=orange]  (14,0)--(16,0)--(16,2)--(14,2);	
	\fill [color=yellow]  (16,0)--(18,0)--(16,2);	
   
   \draw [->] (-1,0)--(19.5,0);
   \draw [->] (0,-1)--(0,5.5);
   
   \draw [very thick] (0,0)--(4,4)--(6,2)--(8,2)--(10,0)--(12,2)--(16,2)--(18,0);
   \tdot{0}{0}{black}
   \tdot{2}{2}{black}
   \tdot{4}{4}{black}
   \tdot{6}{2}{black}
   \tdot{8}{2}{black}
   \tdot{10}{0}{black}
   \tdot{12}{2}{black}
   \tdot{14}{2}{black}
   \tdot{16}{2}{black}
   \tdot{18}{0}{black}
   
   \node at (0.9,1.9) {$U$};
   \node at (2.9,3.9) {$U$};
   \node at (5.3,3.9) {$D$};
   \node at (7,3) {$H^E$};
   \node at (9.3,1.9) {$D$};
   \node at (10.9,1.9) {$U$};
   \node at (13,3) {$H^N$};
   \node at (15,3) {$H^E$};
   \node at (17.3,1.9) {$D$};
   
   \end{scope}
	
  \end{tikzpicture}
  \caption{Illustrating the construction from polyominoes to labelled Motzkin paths using the diagonal distance.\label{fig:exa_poly_motzkin}}
\end{figure}  

\end{example}

Note that the path $w$ thus obtained is indeed a Motzkin path with labelled $H$-steps. Moreover, given the dimension of the parallelogram polyomino ($(m+1) \times n$), $w$ has $m+n-1$ steps in total, of which exactly $m$ are either $U$ or $H^E$ steps (equivalently $n-1$ steps $D$ or $H^N$, equivalently $m$ steps $D$ or $H^E$, equivalently $n-1$ steps $U$ or $H^N$). We denote $\Motz{m}{n-1}$ the set of such labelled Motzkin paths.

\begin{theorem}[{\cite[Section~3]{DV}}]\label{thm:bij_para_motz}
The map $\Xi : \Para{m+1}{n} \rightarrow \Motz{m}{n-1}, P \mapsto w(P)$, where $w$ is defined by Equation~\eqref{eq:poly_to_motzkin}, is a bijection.
\end{theorem}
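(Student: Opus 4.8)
The plan is to establish that $\Xi$ is a bijection by showing it is well-defined, injective, and surjective, with the cleanest route being to exhibit an explicit inverse map. First I would verify that $w = w(P)$ is genuinely a labelled Motzkin path in $\Motz{m}{n-1}$. The key observation is that the diagonal distance between $\U$ and $\L$ equals the height of the path $w$ at each step: the two paths meet only at $(0,0)$ and $(m+1,n)$, so after discarding the first and last steps (which are forced to be $N$ on $\U$ and $E$ on $\L$, respectively $E$ and $N$), the distance is strictly positive at every interior lattice line $\Delta_i$ and returns to zero only at the endpoints. This non-negativity is exactly the condition that $w$ never goes below the $x$-axis. The step-count bookkeeping (that exactly $m$ of the steps are $U$ or $H^E$) then follows from counting $E$-steps of $\U$ against the dimensions of the bounding box.

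The heart of the argument is constructing the inverse $\Xi^{-1}$. Given a labelled Motzkin path $w = w_1 \cdots w_{m+n-1} \in \Motz{m}{n-1}$, I would reconstruct the pair of paths $(\U, \L)$ by reading Equation~\eqref{eq:poly_to_motzkin} backwards step by step: each $w_i$ prescribes the pair $(u_i, \ell_i)$ uniquely, since the four cases $U, H^N, H^E, D$ correspond bijectively to the four combinations of $(u_i, \ell_i) \in \{N,E\}^2$. Prepending $N$ to $\U$ and $E$ to $\L$ (and appending $E$, resp.\ $N$) then recovers candidate upper and lower paths. I would then check that this candidate pair $(\U, \L)$ is a legitimate parallelogram polyomino: both paths go from $(0,0)$ to $(m+1,n)$ (a consequence of the step counts defining $\Motz{m}{n-1}$), and they do not cross in the interior (a consequence of the Motzkin height staying strictly positive away from the endpoints, which translates back into the diagonal distance being positive, hence $\U$ staying strictly above $\L$). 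Since the forward and backward constructions are mutually inverse at the level of individual steps, $\Xi \circ \Xi^{-1}$ and $\Xi^{-1} \circ \Xi$ are both the identity.

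The main obstacle I anticipate is the careful translation between the ``height of $w$'' and the ``diagonal distance between $\U$ and $\L$,'' and in particular confirming that the strict positivity of this distance in the interior is equivalent both to the non-crossing condition for the polyomino and to the Motzkin path staying (weakly) above the axis with returns to zero only at the ends. This requires matching up the indexing of the lines $\Delta_i$ with the steps $w_i$ and verifying that a step of $w$ at height zero in the interior would force $\U$ and $\L$ to touch, contradicting the parallelogram condition. Once this dictionary between the geometric and the path-theoretic descriptions is pinned down, the bijectivity is essentially a formal consequence of the step-by-step correspondence in Equation~\eqref{eq:poly_to_motzkin}, and the remaining verifications (step counts, endpoints) are routine. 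Since this result is cited from~\cite{DV}, I would keep the argument brief, emphasising the diagonal-distance reformulation already introduced before the statement, as this makes both directions transparent.
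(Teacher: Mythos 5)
First, a point of comparison: the paper does not actually prove this statement --- it is quoted from~\cite{DV}, and only the construction of $w(P)$ is recalled --- so your sketch is being measured against the standard argument rather than against anything in the text. Your overall strategy is the right one: the four cases of Equation~\eqref{eq:poly_to_motzkin} put the step pairs $(u_i,\ell_i)\in\{N,E\}^2$ in bijection with the step types $\{U,H^N,H^E,D\}$, so one builds $\Xi^{-1}$ letter by letter and only has to check the two global constraints (the step counts forcing both reconstructed paths to end at $(m+1,n)$, and the non-crossing condition) via the diagonal-distance dictionary.

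However, that dictionary is miscalibrated by one in your write-up, and the verification you single out as the main obstacle would fail as stated. The height of $w$ after $i$ steps equals the diagonal distance between $\U$ and $\L$ along $\Delta_{i+1}$ \emph{minus one}: on $\Delta_1$ the two paths sit at $(0,1)$ and $(1,0)$, at (lattice) distance $1$, while the Motzkin path starts at height $0$. Hence the parallelogram condition --- the paths meet only at the two corners, i.e.\ the distance is at least $1$ on every $\Delta_i$ with $i\in[m+n]$ --- corresponds to height $\geq 0$, \emph{with interior returns to the axis allowed}: height $0$ means the paths are at distance exactly $1$ on that diagonal, which is perfectly legal. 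Your claims that the ``Motzkin height stay[s] strictly positive away from the endpoints'' and that ``a step of $w$ at height zero in the interior would force $\U$ and $\L$ to touch'' are therefore false; the paper's own example $w = UUDH^EDUH^NH^ED$ returns to height $0$ after five steps and comes from a genuine polyomino. If you carried this strict-positivity requirement into the construction of $\Xi^{-1}$, the inverse would be undefined on every labelled Motzkin path touching the axis in the interior, and surjectivity would break. The repair is simply to replace the correspondence ``distance $>0$ $\leftrightarrow$ height $>0$'' by ``distance $\geq 1$ $\leftrightarrow$ height $\geq 0$'' throughout; with that correction the rest of your argument (the endpoint check from the step counts, the fact that the distance changes by at most one per step and never vanishes so the upper path stays above the lower one, and the letter-by-letter invertibility) goes through and gives a complete proof.
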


By combining this map $\Xi$ and the bijection $\Phi$ from Theorem~\ref{thm:bij_rec_para}, we can get a bijection from $\Motz{m}{n-1}$ to $\DetSortRecmn$. Let us conclude this paper by giving direct descriptions of this bijection and its converse.

\begin{definition}\label{def:motz_DR}
Let $w = w_1 \cdots w_{m+n-1} \in \Motz{m}{n-1}$. We construct a sorted configuration $c = c(w) = (c^t; c^b) \in \Configmn$ as follows.
\begin{itemize}[topsep=2pt]
\item For $i \in [m]$, let $j$ be such that $w_j$ is the $i$-th step of the form $D$ or $H^E$ in $w$. Then we set $c^t_i := \sharp \left\{ j' < j; \, w_{j'} \in \left\{U, H^N\right\} \right\}$.
\item For $j \in [n-1]$, let $i$ be such that $w_i$ is the $j$-th step of the form $D$ or $H^N$ in $w$. Then we set $c^b_j := \sharp \left\{ i' < i; \, w_{i'} \in \left\{U, H^E\right\} \right\}$. Finally, we also set $c^b_n := m$.
\end{itemize}
\end{definition}

\begin{example}
Consider the labelled Motzkin path from the right of Figure~\ref{fig:exa_poly_motzkin}, given by $w = UUDH^EDUH^NH^ED$. First let us calculate $c^t$. For $i=1$, the first step of the form $D$ or $H^E$ is $w_3 = D$. There are $2$ steps before it of the form $U$ or $H^N$ (given by $w_1 = w_2 = U$), so we set $c^t_1 = 2$. Next, for $i=2, 3$, we see that the second and third steps in $w$ of the form $D$ or $H^E$ are $w_4 = H^E$ and $w_5 = D$, both of which also have the same $2$ steps before them of the form $U$ or $H^N$. We therefore set $c^t_2 = c^t_3 = 2$. Finally, for $i = 4, 5$, we see that the fourth and fifth steps in $w$ of the form $D$ or $H^E$ are $w_8 = H^E$ and $w_9 = D$, both of which have $4$ steps before them of the form $U$ or $H^N$ (given by $w_1 = U$, $w_2 = U$, $w_6 = U$ and $w_7 = H^N$). We therefore set $c^t_4 = c^t_5 = 4$, yielding $c^t = (2, 2, 2, 4, 4)$. The same process yields the bottom configuration $c^b = (2,3,4,5,5)$, where the last value is set according to the rule $c^b_n = m$. Finally, we get the configuration $c = (2,2,2,4,4; 2,3,4,5,5)$, and we can check that this is the same configuration corresponding to the parallelogram polyomino from the left of Figure~\ref{fig:exa_poly_motzkin} according to the Dukes-Le Borgne bijection of Theorem~\ref{thm:bij_rec_para}.
\end{example}

Algorithm~\ref{algo:motz_DR} provides a construction of $c = c(w)$ that uses a single loop over the path $w$.

\begin{algorithm}[ht]
\caption{The bijection from $\Motz{m}{n-1}$ to $\DetSortRecmn$}\label{algo:motz_DR}
  \begin{algorithmic}
      \Require $w = w_1 \cdots w_{m+n-1} \in \Motz{m}{n-1}$
      \State \textbf{Initialise:} $c^t = c^b = ()$ (empty lists); $\mathrm{tVal} = \mathrm{bVal} = 0$; $i=j=1$
      \For {$k$ from $1$ to $m+n-1$} 
        \If {$w_k = U$}
          \State $\mathrm{tVal} \gets \mathrm{tVal + 1}$; $\mathrm{bVal} \gets \mathrm{bVal + 1}$
        \ElsIf {$w_k = H^E$}
          \State $c^t.\mathtt{append}(\mathrm{tVal})$; $\mathrm{bVal} \gets \mathrm{bVal + 1}$
        \ElsIf {$w_k = H^N$}
          \State $c^b.\mathtt{append}(\mathrm{bVal})$; $\mathrm{tVal} \gets \mathrm{tVal + 1}$
        \Else \Comment{$w_k = D$}
          \State $c^t.\mathtt{append}(\mathrm{tVal})$; $c^b.\mathtt{append}(\mathrm{bVal})$
        \EndIf
      \EndFor
      \State $c^b.\mathtt{append}(m)$ \Comment{to get $c^b_n = m$} \\
      \Return $c = (c^t; c^b) \in \DetSortRecmn$
  \end{algorithmic}
\end{algorithm}

\begin{theorem}\label{thm:bij_motz_DR}
The map $w \mapsto c(w)$ described by Algorithm~\ref{algo:motz_DR} is the bijection $\Phi^{-1} \circ \Xi^{-1} : \Motz{m}{n-1} \rightarrow \DetSortRecmn$.
\end{theorem}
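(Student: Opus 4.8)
The plan is to follow a labelled Motzkin path $w = w_1 \cdots w_{m+n-1} \in \Motz{m}{n-1}$ through the two inverse bijections $\Xi^{-1}$ and $\Phi^{-1}$, to check that the sorted configuration produced coincides with $c(w)$ of Definition~\ref{def:motz_DR}, and finally that the loop of Algorithm~\ref{algo:motz_DR} computes this configuration. First I would apply $\Xi^{-1}$: inverting Equation~\eqref{eq:poly_to_motzkin} recovers the polyomino $P = \Xi^{-1}(w)$ with upper path $\U = (N, u_1, \ldots, u_{m+n-1}, E)$ and lower path $\L = (E, \ell_1, \ldots, \ell_{m+n-1}, N)$, where for each $i \in [m+n-1]$ we have $u_i = N \iff w_i \in \{U, H^N\}$, $u_i = E \iff w_i \in \{H^E, D\}$, $\ell_i = N \iff w_i \in \{H^N, D\}$, and $\ell_i = E \iff w_i \in \{U, H^E\}$. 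A quick count confirms that the interior steps $u_1, \ldots, u_{m+n-1}$ contain exactly $m$ steps $E$ and $n-1$ steps $N$, and symmetrically for $\L$.

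Next I would apply $\Phi^{-1}$ (Theorem~\ref{thm:bij_rec_para}), which reads $c^t_i$ as the height of the $i$-th $E$ step of $\U$ minus one, and $c^b_j$ as the horizontal position of the $j$-th $N$ step of $\L$ minus one. The key bookkeeping is that, since $\U$ begins with an $N$ and its first $m$ $E$ steps are precisely the steps $u_i = E$ (the final $E$ step, at height $n$, corresponding to the sink), the $i$-th $E$ step of $\U$ is the $i$-th index $j$ with $w_j \in \{H^E, D\}$; its height equals $1$ plus the number of $N$ steps among $u_1, \ldots, u_{j-1}$, i.e.\ $1 + \sharp\{j' < j; w_{j'} \in \{U, H^N\}\}$. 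Subtracting one recovers exactly the value of $c^t_i$ in Definition~\ref{def:motz_DR}. The symmetric computation on $\L$ gives $c^b_j = \sharp\{i' < i; w_{i'} \in \{U, H^E\}\}$ for $j \in [n-1]$, where $w_i$ is the $j$-th step in $\{H^N, D\}$; moreover the $n$-th (final) $N$ step of $\L$ sits at horizontal position $m+1$, yielding $c^b_n = m$. This establishes the functional identity $\Phi^{-1} \circ \Xi^{-1} = c(\cdot)$.

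It then remains to verify that Algorithm~\ref{algo:motz_DR} evaluates $c(w)$. For this I would prove the loop invariant that, at the start of iteration $k$, one has $\mathrm{tVal} = \sharp\{j < k; w_j \in \{U, H^N\}\}$ and $\mathrm{bVal} = \sharp\{j < k; w_j \in \{U, H^E\}\}$, which holds since $\mathrm{tVal}$ is incremented exactly on $U$ and $H^N$ steps and $\mathrm{bVal}$ exactly on $U$ and $H^E$ steps. Consequently the value appended to $c^t$ on an $H^E$ or $D$ step (resp.\ to $c^b$ on an $H^N$ or $D$ step) is precisely the count prescribed by Definition~\ref{def:motz_DR}, and the final $c^b.\mathtt{append}(m)$ supplies $c^b_n = m$. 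Since $\Xi$ and $\Phi$ are bijections by Theorems~\ref{thm:bij_para_motz} and~\ref{thm:bij_rec_para}, so is $\Phi^{-1} \circ \Xi^{-1}$, which identifies the algorithm's output map with the claimed bijection. The only delicate point, and the main obstacle (though a modest one), is the index and off-by-one bookkeeping: correctly pairing the $i$-th $\{H^E, D\}$ step of $w$ with the $i$-th $E$ step of $\U$ (and likewise for $\L$), and reconciling the ``$+1$'' shifts in the Dukes--Le Borgne heights and positions with the ``strictly before'' counts in the definition, including the boundary value $c^b_n = m$ coming from the sink and the last step of $\L$.
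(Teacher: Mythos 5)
Your proposal is correct and follows the same route as the paper's proof, which simply asserts that Algorithm~\ref{algo:motz_DR} outputs the configuration of Definition~\ref{def:motz_DR} and that the identity $c(\cdot) = \Phi^{-1}\circ\Xi^{-1}$ then follows from Theorem~\ref{thm:bij_rec_para}, Equation~\eqref{eq:poly_to_motzkin} and Theorem~\ref{thm:bij_para_motz}. You have supplied exactly the bookkeeping (the inversion of Equation~\eqref{eq:poly_to_motzkin}, the height/position computations in the Dukes--Le Borgne bijection, and the loop invariant for $\mathrm{tVal}$ and $\mathrm{bVal}$) that the paper leaves to the reader as ``straightforward''.
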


\begin{proof}
It is straightforward to see that Algorithm~\ref{algo:motz_DR} outputs the configuration $c = c(w) = (c^t_1, \cdots, c^t_m; c^b_1, \cdots c^b_n)$ given in Definition~\ref{def:motz_DR}. The result then follows from Theorem~\ref{thm:bij_rec_para}, Equation~\eqref{eq:poly_to_motzkin} and Theorem~\ref{thm:bij_para_motz}.
\end{proof}

Finally, Algorithm~\ref{algo:DR_motz} shows the converse, i.e.\ how to construct the path $w = w(c) \in \Motz{m}{n-1}$ from a sorted DR configuration $c = (c^t; c^b)$. To do this, we put the sorted top and bottom configurations $c^t$ and $c^b$ into \emph{sorted stacks}, with the heads of each stack being its smallest element. In the stack $c^t$, we place an additional element $c^t_{m+1} = n-1$ at the tail. The operation $\mathtt{head}()$ accesses (returns) the element at the head of the stack, while $\mathtt{pop}()$ removes it. Finally, we denote $\mathbf{1}$ the vector whose entries are all $1$'s.

\begin{algorithm}[ht]\caption{The bijection from $\DetSortRecmn$ to $\Motz{m}{n-1}$}\label{algo:DR_motz}
  \begin{algorithmic}[1]
    \Require sorted stacks $c^t = (c^t_1 \leq  \cdots \leq c^t_m \leq c^t_{m+1} = n-1)$, $c^b = (c^b_1 \leq \cdots \leq c^b_n)$
    \State \textbf{Initialise:} empty list $w = ()$
    \While{$c^t \neq \emptyset \And c^b \neq \emptyset$}
      \While{$c^t.\mathrm{head}() > 0 \And c^b.\mathrm{head}() > 0$}
        \State $w.\texttt{append}(U)$
        \State $c^t \gets c^t - \mathbf{1}$; $c^b \gets c^b - \mathbf{1}$ \Comment{Decrease all values in $c^t$ and $c^b$ by one}
      \EndWhile
      \If{$0 = c^t.\mathrm{head}() < c^b.\mathrm{head}()$}
        \State $c^t.\mathtt{pop}()$; $w.\texttt{append}(H^E)$
        \State $c^b \gets c^b - \mathbf{1}$ 
      \ElsIf{$0 = c^b.\mathrm{head}() < c^t.\mathrm{head}()$}
        \State $c^b.\mathtt{pop}()$; $w.\texttt{append}(H^N)$
        \State $c^t \gets c^t - \mathbf{1}$ 
      \Else \Comment{$c^b.\mathrm{head}() = c^t.\mathrm{head}() = 0$}
        \State $c^t.\mathtt{pop}()$; $c^b.\mathtt{pop}()$; $w.\texttt{append}(D)$
      \EndIf
    \EndWhile
    \State \textbf{Delete} final $D$-step from $w = (w_1, \cdots, w_{m+n-1}, w_{m+n} = D)$ \\
    \Return $w = (w_1, \cdots, w_{m+n-1}) \in \Motz{m}{n-1}$
  \end{algorithmic}
\end{algorithm}

\begin{example}\label{exa:bij_DR_motz}
Consider the sorted DR configuration $c = (2,2,2,4,4; 2,3,4,5,5)$ corresponding to the parallelogram polyomino $P$ on the left of Figure~\ref{fig:exa_poly_motzkin}. We first put an additional element into the top stack $c^t$, yielding the starting stacks $c^t = (2, 2, 2, 4, 4, 4)$ and $c^b = (2, 3, 4, 5, 5)$.
\begin{itemize}
\item Initially we have $c^t_1 = c^b_1 = 2$. This means that we apply the steps of lines~4 and 5 twice, yielding $w = UU$, $c^t = (0,0,0,2,2,2)$, and $c^b = (0,1,2,3,3)$.
\item Here we are in the last case, where $c^b.\mathrm{head}() = c^t.\mathrm{head}() = 0$, so we apply line~14 of the algorithm. This gives $w = UUD$, $c^t = (0,0,2,2,2)$, and $c^b = (1,2,3,3)$.
\item We are now in the first case, where $0 = c^t.\mathrm{head}() < c^b.\mathrm{head}()$, so we apply lines~8 and 9 of the algorithm, yielding $w = UUDH^E$, $c^t = (0,2,2,2)$, and $c^b = (0,1,2,2)$.
\item We are once again in the last case, so we apply line~14, yielding $w = UUDH^ED$, $c^t = (2,2,2)$, and $c^b = (1,2,2)$.
\item Now we have $c^t.\mathrm{head}() = 2$ and $c^b.\mathrm{head}() = 1$, so we apply lines~4 and 5 once, yielding $w = UUDH^EDU$, $c^t = (1,1,1)$, and $c^b = (0,1,1)$.
\item Here we are in the second case, where $0 = c^b.\mathrm{head}() < c^t.\mathrm{head}()$, so we apply lines~11 and 12 of the algorithm, yielding $w = UUDH^EDUH^N$, $c^t = (0,0,0)$, and $c^b = (1,1)$.
\item Now we are in the first case, where $0 = c^t.\mathrm{head}() < c^b.\mathrm{head}()$, so we apply lines~8 and 9 of the algorithm, yielding $w = UUDH^EDUH^NH^E$, $c^t = (0,0)$, and $c^b = (0,0)$.
\item We will then apply line~14 twice more to exit the outer while loop, emptying both stacks, and yielding $w = UUDH^EDUH^NH^EDD$.
\item Finally, we delete the final $D$ step from $w$, so that the algorithm returns $w = UUDH^EDUH^NH^ED$. We note that this is indeed the labelled Motzkin path from the right of Figure~\ref{fig:exa_poly_motzkin}.
\end{itemize}
\end{example}

\begin{theorem}\label{thm:bij_DR_motz}
The map $c \mapsto w(c)$ described by Algorithm~\ref{algo:DR_motz} is the bijection $\Xi \circ \Phi : \DetSortRecmn \rightarrow \Motz{m}{n-1}$. Moreover, for $c \in \DetSortRecmn$, we have $\level{c} = \mathrm{Area}(w(c))$.
\end{theorem}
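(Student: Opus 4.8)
The plan is to identify the map computed by Algorithm~\ref{algo:DR_motz} as the inverse of the map computed by Algorithm~\ref{algo:motz_DR}. By Theorem~\ref{thm:bij_motz_DR} the latter is the bijection $F := \Phi^{-1} \circ \Xi^{-1} : \Motz{m}{n-1} \to \DetSortRecmn$; since $F$ is a bijection between finite sets, it suffices to exhibit a left inverse $G$ (i.e.\ $G \circ F = \mathrm{id}$), as this forces $G = F^{-1} = \Xi \circ \Phi$, which is exactly the claim. I would begin by recording the structural duality between the two algorithms: in both, the steps $H^E$ and $D$ are the ones that consume a top coordinate (an \texttt{append} in Algorithm~\ref{algo:motz_DR}, a \texttt{pop} in Algorithm~\ref{algo:DR_motz}), the steps $H^N$ and $D$ consume a bottom coordinate, the steps $U$ and $H^N$ act on the top counter, and the steps $U$ and $H^E$ act on the bottom counter. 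This makes the two loops manifestly dual — one builds the sorted stacks from the path, the other peels them back — and motivates the rest of the argument.

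The core is a loop invariant for Algorithm~\ref{algo:DR_motz} run on $c = c(w)$. I would prove by induction on the number of appended steps that, just before the $k$-th step is determined, the stack $c^t$ consists exactly of the original (sorted) top values not yet popped, each decreased by the number of previously appended $U$ and $H^N$ steps, and symmetrically for $c^b$ with the $U$ and $H^E$ steps. Because the input is sorted and each decrement is applied uniformly (by $\mathbf 1$), the stacks stay sorted and each \texttt{head} is the current minimum. The crux is then to match the branch taken at step $k$ with the value prescribed by Definition~\ref{def:motz_DR}: a top value $c^t_i$ reaches $0$ at the head precisely after $c^t_i$ many $U$/$H^N$ steps have been processed, which by construction of $c(w)$ is exactly the position of the $i$-th $H^E$-or-$D$ step of $w$, and dually for the bottom values. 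Verifying that the inner \texttt{while} loop (the $U$ steps) and the three conditional branches fire in exactly the order dictated by $w$, and never conflict, then reduces to the arithmetic of these counters together with the fact that $w$ is a genuine element of $\Motz{m}{n-1}$ (correct step multiplicities and non-negativity of its heights).

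I would handle the boundary bookkeeping separately. The extra stack entries $c^t_{m+1} = n-1$ and $c^b_n = m$ correspond to the initial and final steps of $\U$ and $\L$ discarded in passing from $P$ to $w$ via Equation~\eqref{eq:poly_to_motzkin}; concretely, they force the run to terminate with both stacks emptying simultaneously and with a final $D$ step — exactly the step removed on the penultimate line of Algorithm~\ref{algo:DR_motz}. After this deletion the output has length $m+n-1$ with the correct number of $U$/$H^E$ steps, so it lies in $\Motz{m}{n-1}$, and by the invariant it equals $w$. This establishes $G \circ F = \mathrm{id}$, hence $G = \Xi \circ \Phi$.

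Finally, for the level formula I would combine Theorem~\ref{thm:bij_rec_para}, which gives $\level{c} = \mathrm{Area}(\Phi(c)) - m - n$, with the area behaviour of $\Xi$, namely $\mathrm{Area}(w(P)) = \mathrm{Area}(P) - (m+n)$. This last identity can be read off the diagonal-distance description: the area of $P \in \Para{m+1}{n}$ is the sum of the diagonal distances along the $m+n$ interior anti-diagonals $\Delta_1, \ldots, \Delta_{m+n}$, and these distances are the successive heights of the augmented path $U \cdot w(P) \cdot D$ obtained by reinstating the two discarded boundary steps. Since this augmented path runs one unit above $w(P)$ throughout the interior and $w(P)$ begins and ends at height $0$, the sum of the distances equals $(m+n) + \mathrm{Area}(w(P))$, and substituting yields $\level{c} = \mathrm{Area}(w(c))$. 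The main obstacle throughout is not any single deep idea but the careful synchronisation of the counter invariant with the control flow of Algorithm~\ref{algo:DR_motz} — in particular checking that the \texttt{while}-loop boundaries, the simultaneous vanishing of the two heads, and the forced terminal $D$ all align exactly with the step-by-step data of $w$.
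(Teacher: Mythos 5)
Your argument is correct, but it takes a genuinely different route from the paper's. The paper proves the identity $A(c) = \Xi \circ \Phi(c)$ directly, by induction on $m+n$: it sets $k = \min\{c^t_1, c^b_1\}$, observes that the output begins with $k$ steps $U$, and then splits into three cases according to which branch determines the $(k+1)$-st step; in each case it deletes the corresponding pair of steps from $\U(P)$ and $\L(P)$ to produce a smaller polyomino $P' = \Phi(c')$ for an explicitly constructed smaller configuration $c'$, and applies the induction hypothesis. You instead bypass the polyomino entirely for the bijection claim: you use Theorem~\ref{thm:bij_motz_DR}, which identifies Algorithm~\ref{algo:motz_DR} with $F = \Phi^{-1}\circ\Xi^{-1}$, and show via a loop invariant that Algorithm~\ref{algo:DR_motz} is a left inverse of $F$, whence it equals $\Xi\circ\Phi$ by surjectivity of $F$. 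Your invariant is the right one: after a prefix $w_1\cdots w_{k-1}$ has been emitted, the head of $c^t$ equals $\sharp\{k \leq j' < j_i;\, w_{j'} \in \{U, H^N\}\}$ where $j_i$ is the position of the next $D$-or-$H^E$ step, and since no $D$-or-$H^E$ step occurs strictly between positions $k$ and $j_i$, this head vanishes exactly when $w_k \in \{D, H^E\}$ (dually for $c^b$), which is precisely the branch condition; the sentinel entries $c^t_{m+1}=n-1$ and $c^b_n=m$ then force both heads to reach $0$ simultaneously at step $m+n$, producing the deleted terminal $D$. What your route buys is that it never has to re-engage with the cut-and-paste surgery on $\U$ and $\L$; its cost is that it leans on Theorem~\ref{thm:bij_motz_DR}, whose proof the paper only sketches, and on Definition~\ref{def:motz_DR} as the precise description of $F$. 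For the level formula your computation ($\mathrm{Area}(P) = \sum_{i=1}^{m+n} d_i$ with $d_i$ the diagonal distances, each one more than the corresponding height of $w$, giving $\mathrm{Area}(P) = \mathrm{Area}(w) + m + n$) is essentially the paper's diagonal-band argument in a different guise, and it checks out numerically against Figure~\ref{fig:exa_poly_motzkin}.
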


\begin{proof}
First, note that Algorithm~\ref{algo:DR_motz} always terminates for any sorted input stacks $c^t, c^b$, and outputs a finite word $w \in \left\{U, D, H^N, H^E \right\}^*$. We may therefore define the map $A : \DetSortRecmn \rightarrow \left\{U, D, H^N, H^E \right\}^*, c \mapsto w(c)$, given by the algorithm when the input corresponds to a sorted DR configuration on $K^0_{m,n}$. We wish to show that, for any $m \geq 0$ and $n \geq 1$, we have:
\begin{equation}\label{eq:thm_bij_DR_motz}
\forall c \in \DetSortRecmn, \, A(c) = \Xi \circ \Phi (c),
\end{equation}
where $\Xi$ and $\Phi$ are the bijections from Theorems~\ref{thm:bij_para_motz} and \ref{thm:bij_rec_para} respectively.

We proceed by induction on $m, n$. The base case is $m=0, n=1$, for which $K^0_{m,n}$ is the graph consisting of a single edge from the sink $v^t_0$ to the bottom vertex $v^b_1$. In this case there is a unique DR configuration $c = (\emptyset; 0)$ (which is the only stable configuration). For the input to Algorithm~\ref{algo:DR_motz}, we set the convention $c^t_1 = n-1 = 0$, leaving the stacks $c^t = c^b = (0)$. When running the algorithm, the inner while loop (lines~3 to 6) is empty, and we are in the case of the else branch (line~13). This yields $w = D$, and empties both stacks, so that we exit the outer while loop (lines~2 to 16). The $D$ step is therefore immediately deleted, so that the algorithm outputs the empty word $w = \emptyset$. Now we also have $\Phi(c) = P \in \Para{1}{1}$, where $P$ is the parallelogram polyomino consisting of a single cell, and clearly $\Xi(P) = \emptyset$ as desired.

For the induction step, fix $m \geq 0$ and $n \geq 1$ such that $m + n > 1$. Suppose that Equation~\eqref{eq:thm_bij_DR_motz} holds for all $m' \leq m$ and $n' \leq n$ such that $m' + n' < m + n$. Let $c \in \DetSortRecmn$, with the corresponding stacks $c^t$ and $c^b$, and define $k := \min \{c^t_1, c^b_1\}$. 
We let $P := \Phi(c) \in \Para{m+1}{n}$ be the parallelogram polyomino corresponding to $c$ via the Dukes-Le Borgne bijection (Theorem~\ref{thm:bij_rec_para}), and write $\U = \U(P) = (N,u_1, \cdots, u_{m+n-1}, E)$ and $\L = \L(P) = (E, \ell_1, \cdots, \ell_{m+n-1}, N)$ for its upper and lower paths. 
Finally, we let $w = A(c)$ be the corresponding word output by Algorithm~\ref{algo:DR_motz}. We wish to show that $w = \Xi(P)$. By definition of $k$, the word $w$ starts with exactly $k$ steps $U$. Moreover, by construction of $\Phi$, we have $u_1 = \cdots = u_k = N$ and $\ell_1 = \cdots = \ell_k = E$. We distinguish the three cases defining $w_{k+1}$ given by the algorithm.

\medskip

\textbf{Case~1:} $k = c^t_1 < c^b_1$.  

\noindent By definition, we have $w_{k+1} = H^E$. Moreover, the bijection $\Phi$ gives $u_{k+1} = \ell_{k+1} = E$. Next, we define $c' := (c^t_2, \cdots, c^t_m; c^b - \mathbf{1}) \in \Config{m-1}{n}$. 
We also define $\U'$ and $\L'$ to be the paths $\U$ and $\L$ with their $(k+1)$-th step (which is $E$ for both paths) deleted, i.e.\ $\U':= (N, u_1, \cdots, u_k, u_{k+2}, \cdots, u_{m+n-1}, E)$ and $\L' := (E, \ell_1, \cdots, \ell_k, \ell_{k+2}, \cdots, \ell_{m+n-1}, N)$.
It is straighforward to check that these are the upper and lower paths of a new parallelogram polyomino $P' \in \Para{m}{n}$, which satisfies $P' = \Phi(c')$. In particular, this implies that $c' \in \DetSortRec{m-1}{n}$, and the induction hypothesis then gives $w' := A(c') = \Xi(P') \in \Motz{m-1}{n-1}$. 

Now by definition of $c'$, the transformation $c \leadsto c'$ corresponds exactly to the first modification of the input stacks $c^t, c^b$ in lines~8 and 9 of Algorithm~\ref{algo:DR_motz}, which is applied after the $k$ iterations of the inner while loop (lines~4 and 5). In other words, if $w' = A(c') = w'_1 \cdots w'_{m+n-2}$ is output by the algorithm with input $c'$, then we have $w = A(c) = w'_1 \cdots w'_k H^E w'_{k+1} \cdots w'_{m+n-2}$. The definition of $\Xi$ and construction $P \leadsto P'$, combined with the induction hypothesis, then immediately imply that $w = \Xi(P)$, as desired.

\medskip

\textbf{Case~2:} $k = c^b_1 < c^t_1$.  

\noindent This case is entirely analogous to Case~1, exchanging the roles of $c^t$ and $c^b$ and changing the steps $E$ and $H^E$ to $N$ and $H^N$ respectively.

\medskip

\textbf{Case~3:} $c^b_1 = c^t_1 = k$.

\noindent By definition, we have $w_{k+1} = D$, and the bijection $\Phi$ gives $u_{k+1} = E$ and $\ell_{k+1} = N$. First note that in this case we must have $k \geq 1$ since $c$ is DR. Indeed, otherwise we would have $\U(P) = (N, E, \cdots)$ and $\L(P) = (E, N, \cdots)$, contradicting the definition of a parallelogram polyomino. We then define $c' := (c^t_2 - 1, \cdots, c^t_m - 1; c^b_2 - 1, \cdots, c^b_n - 1) \in \Config{m-1}{n-1}$. Similarly to Case~1, we consider the corresponding parallelogram polyomino $P' = \Phi(c')$, which has upper and lower paths $\U':= (N, u_1, \cdots, u_k, u_{k+2}, \cdots, u_{m+n-1}, E)$ and $\L' := (E, \ell_1, \cdots, \ell_k, \ell_{k+2}, \cdots, \ell_{m+n-1}, N)$.

We apply the induction hypothesis to $c'$, which implies that $w' := A(c') = \Xi(P') \in \Motz{m-1}{n-2}$. Now note that, in this case, the transformation $c \leadsto c'$ corresponds exactly to initially applying the steps of the inner loop (lines~4 and 5) of Algorithm~\ref{algo:DR_motz} to the input stacks $c^t, c^b$ once, together with the first modification of the input stacks in line~14 (after all $k$ iterations of the inner loop). In other words, if $w' = A(c') = w'_1 \cdots w'_{m+n-3}$ is output by the algorithm with input $c'$, then we have $w = A(c) = U w'_1 \cdots w'_{k-1} D w'_{k} \cdots w'_{m+n-3}$. The result then follows as in Case~1 by applying the induction hypothesis to $w'$ and using the definition of the bijection $\Xi$, together with the construction $P \leadsto P'$. 

For the level formula, note that in the bijection $\Xi$ from $\Para{m+1}{n}$ to $\Motz{m}{n-1}$, each step in the Motzkin path corresponds to a ``diagonal band'' in the parallelogram polyomino (see Figure~\ref{fig:exa_poly_motzkin} for an illustration of these). It is straightforward to see that the area of each diagonal band is exactly one more than the area under the corresponding step. The result then follows from the level formula in Theorem~\ref{thm:bij_rec_para}.
\end{proof}

\section*{Acknowledgments}

The authors have no competing interests to declare that are relevant to the content of this article. The research leading to these results is partially supported by the National Natural Science Foundation of China, grant number 12101505, by the Research Development Fund of Xi'an Jiaotong-Liverpool University, grant number RDF-22-01-089, and by the Postgraduate Research Scholarship of Xi'an Jiaotong-Liverpool University, grant number PGRS2012026.

\bibliographystyle{abbrv}
\bibliography{SSM_CompBip_ArXiv_bibliography}

\end{document}